\def \ni{\noindent}
\newcommand{\be}{\begin{equation}}
\newcommand{\ee}{\end{equation}}
\newcommand{\ben}{\begin{equation*}}
\newcommand{\een}{\end{equation*}}
\newcommand{\bes}{\begin{eqnarray}}
\newcommand{\ees}{\end{eqnarray}}
\newcommand{\besn}{\begin{eqnarray*}}
\newcommand{\eesn}{\end{eqnarray*}}
\newcommand{\txt}{\textrm}
\newtheorem{theorem}{Theorem}
\newtheorem*{lemma}{Lemma}
\newtheorem{definition}{Definition}
\newtheorem*{theorem*}{Theorem}
\title{On the Holonomic Equivalence of Two Curves}
\author{Tamer Tlas} 
\date{}
\begin{document}
\maketitle

\abstract{\textit{ Given a principal $G$-bundle $P \to M$ and two $C^1$ curves in $M$ with coinciding endpoints, we say that the two curves are holonomically equivalent if the parallel transport along them is identical for any smooth connection on $P$. The main result in this paper is that if $G$ is semi-simple, then the two curves are holonomically equivalent if and only if there is a thin, i.e. of rank at most one, $C^1$ homotopy linking them. Additionally, it is also demonstrated that this is equivalent to the factorizability through a tree of the loop formed from the two curves and to the reducibility of a certain transfinite word associated to this loop. The curves are not assumed to be regular. \vspace{1cm}
}}

Let $M$ be a smooth manifold, $G$ a Lie group and $P \to M$ a smooth principal $G$-bundle. Let $\mathcal{A}$ be the space of smooth connections on $P$ and $\mathcal{G}$ the space of smooth gauge transformations. It is a well-known fact that $\mathcal{A}/\mathcal{G}$ is one of the richest and most interesting objects to study, with applications ranging from knot invariants to high energy physics. Unfortunately, although $\mathcal{A}$ is an affine space, the quotient is, due to the nonlinearity of the action of $\mathcal{G}$, not even an infinite dimensional manifold. It is natural to try to find a description of $\mathcal{A}$ on which the action of $\mathcal{G}$ would be more manageable.

Fix a point $o \in M$ and suppose $\gamma$ is a loop based at $o$, i.e. it is a, sufficiently differentiable, map from $I = [0,1] \to M$, and which satisfies $\gamma(0) = \gamma(1) = o$. Any element $A \in \mathcal{A}$ associates to this loop a bijection from the fiber over $o$ to itself, and if one fixes some point in this fiber, this bijection can be identified with an element of $G$. This element shall be denoted by $U(\gamma, A)$ and is called the holonomy of $A$ around the loop $\gamma$. The key point is that $\mathcal{G}$ acts very transparently on the holonomy. The holonomy is simply conjugated, with the conjugating element being the same for all the loops based at $o$. It is thus natural to look for a formulation of gauge theory where the holonomies are taken as elementary objects and the connection, and maybe even the bundle, are reconstructed from them afterwards if needed, and indeed, there is a long history of attempts in this direction dating back to \cite{kobayashi}. 

Let us denote the set of loops based at $o$ by $\mathcal{L}$. It is easy to see that not any function from $\mathcal{L}$ to $G$ is a holonomy of some connection. The reason is that if one defines the product of two loops in the usual way\footnote{
\ben
\gamma_1 \cdot \gamma_2 (t) = \left\{ \begin{array}{ll}
\gamma_1(2t) & t \in [0, \frac{1}{2}] \\
 \gamma_2(2t - 1) & t \in [\frac{1}{2}, 1] 
 \end{array} \right.
\een
 } then $U(\gamma_1\cdot \gamma_2, A) = U(\gamma_2, A) U(\gamma_1, A)$. This equation means that, had $\mathcal{L}$ been a group, an element of $\mathcal{A}$ would simply be a homomorphism from this group to $G$, and an element of $\mathcal{A}/\mathcal{G}$ would just be the conjugacy class of this homomorphism. But, the set of loops is not a group, as the product of two loops is neither associative nor does it have an identity. The way to solve this problem is of course clear: one should take an appropriate quotient of the set of loops as is done similarly in elementary homotopy theory. It is evident, however, that taking the quotient of $\mathcal{L}$ by the standard homotopy is inconsistent, as in general, two homotopic loops will have different holonomies. Thus, we are faced with the problem of finding an equivalence relation $\sim$ on $\mathcal{L}$ such that $\{ \mathcal{L}, \cdot \}/\sim$ is a group and moreover \ben \gamma_1 \sim \gamma_2 \implies U(\gamma_1, A) = U(\gamma_2, A) \quad , \quad  \forall A \in \mathcal{A}.\een

In fact, it is natural to require the implication to go in the opposite direction as well, since otherwise it would mean that a connection is not a homomorphism from $\{\mathcal{L}, \cdot \}/\sim$ to $G$ but from a further quotient of it.

The above motivates the following

\begin{definition}
Two loops $\gamma_1, \gamma_2 \in \mathcal{L}$ which satisfy $U(\gamma_1, A) = U(\gamma_2, A)$ for all $A \in \mathcal{A}$ are called holonomically equivalent. 
\end{definition}

Let us stress the point that in this definition we assume $P \to M$, and thus $G$, to be fixed.

In view of the discussion above, we are tasked with finding necessary and sufficient conditions for holonomic equivalence. The answer to this problem depends on the class of paths under consideration. In general, finding a sufficient condition is much easier than obtaining a necessary one. Let us discuss the different classes of paths which have been considered in the literature so far as well as the different associated sufficiency and necessity conditions. Chronologically: 

\begin{itemize}
\item[i-] In \cite{kobayashi}, the loops were assumed to be piecewise differentiable. The equivalence relation was that $\gamma_1 \sim \gamma_2$ if $\gamma_1 \cdot \overline{\gamma}_2$ factors through a finite tree, \cite{kobayashi}.\footnote{This is not the formulation that was used in \cite{kobayashi} but is in fact equivalent to it. The same is true, where applicable, in the other cases.} $\overline{\gamma}$ stands for the loop $\gamma$ traversed in the opposite direction.  Necessity of this relation for holonomic equivalence was not considered, only sufficiency.
\item[ii-] In \cite{chen}, the loops were assumed to be piecewise $C^1$ as well as regular, i.e. that $\dot{\gamma}(t) \neq 0$ for all $t \in I$. The equivalence relation was the same as in [i]. It was shown there that two curves are equivalent if and only if all the Chen's iterated integrals of the loop $\gamma_1 \cdot \overline{\gamma}_2$ vanish. If $\gamma$ is a loop, then its iterated integrals are

\besn
&&\int_0^1dt \, \dot{\gamma}^\mu(t)\\
&&\int_0^1 dt \int_0^t ds \, \dot{\gamma}^\mu(t) \dot{\gamma}^\nu(s)\\
&&\int_0^1 dt \int_0^t ds \int_0^s dr \, \dot{\gamma}^\mu(t) \dot{\gamma}^\nu(s) \dot{\gamma}^\rho(r)\\
&& \vdots
\eesn

Using standard properties of the iterated integrals, it is not very difficult to show that the above set of expressions vanishes if and only if for any collection of smooth 1-forms $w^1, w^2, w^3 \dots$ (which can even be taken to be matrix-valued), the following integrals vanish

\besn
&&\int_0^1 dt \, w^1_\mu(\gamma(t))  \, \dot{\gamma}^\mu(t) \\
&&\int_0^1 dt \int_0^t ds \, w^1_\mu(\gamma(t)) \, w^2_\nu (\gamma(s)) \,  \dot{\gamma}^\mu(t) \dot{\gamma}^\nu(s)\\
&&\int_0^1 dt \int_0^t ds \int_0^s dr \, w^1_\mu(\gamma(t)) \, w^2_\nu(\gamma(s))\, w^3_\rho(\gamma(r)) \, \dot{\gamma}^\mu(t) \dot{\gamma}^\nu(s) \dot{\gamma}^\rho(r)\\
&&\vdots
\eesn

Recalling that $U(\gamma,A)$ has an expression in terms of the path-ordered exponential of $A = A_\mu dx^\mu$

\ben
I + \int_0^1dt A_\mu (\gamma(t)) \dot{\gamma}^\mu(t) + \int_0^1 dt \int_0^t ds A_\mu(\gamma(t)) A_\nu(\gamma(s)) \dot{\gamma}^\mu(t) \dot{\gamma}^\mu(s) + \dots
\een

it is clear that the fact that all the iterated integrals vanish implies that the holonomy around $\gamma$ is trivial (equal to the identity) for every connection. If we know that the $U(A,\gamma) = I$ for every $A$ \textit{and} for every $G$, then one can show that this implies that all the iterated integrals vanish \cite{hain}. However, if one is only given that $U(A, \gamma) = I$ for every $A$ and a \textit{specific} $G$ then the triviality of the holonomy is an a priori weaker statement, since one can only conclude (by a scaling argument) that certain sums of iterated integrals vanish. In view of the above, the necessity condition that has been proven in \cite{chen} is weaker than what we seek if we are concerned with the holonomic equivalence for curves.
\item[iii-] In \cite{ashtekar}, the loops are assumed to be piecewise analytic and the equivalence relation is the same as in [i]. Both sufficiency and necessity are demonstrated. 
\item[iv-] In \cite{caetano}, the loops are assumed to be $C^\infty$ and have all their derivatives vanish at the endpoints. Two loops are declared to be equivalent if $\gamma_1 \cdot \overline{\gamma}_2$ is homotopic to the constant loop via a thin homotopy.\footnote{See definition \ref{def:thin}. } Only sufficiency is shown.
\item[v-] In \cite{hambly}, the loops are assumed to be of bounded variation and $\gamma_1 \sim \gamma_2$ if $\gamma_1 \cdot \overline{\gamma}_2$ factors through a tree. It is shown that two curves are equivalent if and only if all the Chen's iterated integrals vanish. The same remarks as the ones in [ii] on the relation of this to holonomic equivalence apply here.
\end{itemize}

In this manuscript we demonstrate that if $G$ is a semi-simple Lie group, and $\gamma_1, \gamma_2$ are two $C^1$ loops having vanishing derivative at the endpoints then the following are equivalent:

\begin{itemize}
\item[a-] $\gamma_1$ and $\gamma_2$ are holonomically equivalent.
\item[b-] $\gamma_1 \cdot \overline{\gamma}_2$ factors through a tree.\footnote{See definition \ref{def:tree}.}
\item[c-] There is a thin $C^1$ homotopy between the constant loop and $\gamma_1 \cdot \overline{\gamma}_2$.
\item[d-] A certain transfinite word (to be defined later) associated to $\gamma_1 \cdot \overline{\gamma}_2$ is reducible.
\end{itemize}

A few remarks are in order:

\begin{itemize}
\item In view of the fact that a piecewise $C^1$ curve can be reparametrized to become $C^1$, it follows that the same statements are true (with an appropriate modification of [c]) for piecewise $C^1$ loops. 
\item We required the vanishing of the derivative of the loops at the endpoints to ensure that their product is $C^1$, as this is the most useful class of loops if we are interested in a loop-based description of gauge theory \cite{caetano}. If we do not require this, then $\gamma_1 \cdot \overline{\gamma}_2$ is piecewise $C^1$ and the previous remark applies.
\item We have only stated the result for loops, but an essentially identical result (see the corollary after theorem 3) applies if $\gamma_1$ and $\gamma_2$ are two curves such that thir initial points as well as their final ones coincide.
\end{itemize}

In what follows, we assume that all the maps are $C^1$ unless stated otherwise.

Let us begin with a certain decomposition of curves which is of interest in its own right:

\begin{theorem} \label{theorem:words}
Given a curve $\gamma : I \to M$, there is a collection of mutually disjoint sets $\{ A_n \}_{n=0}^\infty$, such that:
\begin{itemize}
\item[a-] $A_0$ is closed, while $A_n$ is open for $n >0$, and $\bigcup_{n=0}^\infty A_n = I$. 
\item[b-] If $t \in A_n$ then $\gamma^{-1}(\gamma(t)) \subset A_n$, moreover if $n> 0$ then the cardinality of $\gamma^{-1}(\gamma(t))$ is equal to $n$.
\item[c-] $\dot{\gamma}(t) \neq 0$ if $t \in A_n$ for $n>0$, while $\dot{\gamma}(t) =0$ for t in the interior of any connected component of $A_0$.
\item[d-] The $A_n$'s being open are a union of disjoint open intervals. $\gamma$ restricted to any such interval is an embedding. Moreover, any two such embeddings are either disjoint or identical (possibly after a reparametrization and a switch in orientation). 
\end{itemize}
\end{theorem}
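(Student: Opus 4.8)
The plan is to define the decomposition explicitly from the self-intersection structure of $\gamma$, check the easy properties essentially by inspection, and reduce the two substantive claims to a single density statement. Put $Z=\{t\in I:\dot\gamma(t)=0\}$ (closed) and $R=I\setminus Z$. Call an open interval $J\subseteq I$ \emph{clean} if $\gamma|_J$ is a regular $C^1$ embedding and $\gamma^{-1}(\gamma(J))$ is a disjoint union of \emph{finitely many} open intervals, on each of which $\gamma$ is a homeomorphism onto $\gamma(J)$. Let $G$ be the set of points having a clean neighbourhood and set $A_0:=I\setminus G$ and, for $n\ge1$, $A_n:=\{t\in G:\#\gamma^{-1}(\gamma(t))=n\}$. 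A clean interval witnesses cleanness of each of its points, so $G$ is open and $A_0$ closed; on a clean $J$ every point of $\gamma(J)$ has exactly $k$ $\gamma$-preimages, where $k$ is the number of intervals in the decomposition, so the multiplicity is locally constant on $G$, each $A_n$ ($n>0$) is open, and the $A_n$ are pairwise disjoint and cover $I$. This gives (a), and (c) for $n>0$ because cleanness forces $\dot\gamma\ne0$. For (b): if $\gamma(s)=\gamma(t)$ and $s$ lies in a clean interval $J$, then $t$ lies in one of the intervals of the decomposition of $\gamma^{-1}(\gamma(J))$, and that interval is again clean with the same $k$; hence membership in $G$ and the value of the multiplicity depend only on $\gamma(t)$, which yields $\gamma^{-1}(\gamma(t))\subseteq A_n$ for $t\in A_n$, the case $n=0$ being the contrapositive of the implication just noted.

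The remaining assertions are the second half of (c) and all of (d). For (c) it suffices to prove that $G$ is \emph{dense in} $R$: then $\mathrm{int}\,A_0\subseteq Z$, so $\dot\gamma$ vanishes on the interior of every connected component of $A_0$. For (d) I would cover a component $L$ of $A_n$ ($n>0$) by clean intervals; two clean intervals with overlapping images have, by applying the preimage-decomposition at a common value, images that agree near that value, hence (the images being arcs) on a relatively clopen subarc; a maximality and patching argument then shows that $\gamma|_L$ is injective, that $\gamma(L)$ is an embedded arc, and that the images of any two components of the $A_n$'s are disjoint or equal. The matching reparametrization comes for free, since two regular $C^1$ embeddings of intervals with the same image differ by a $C^1$ diffeomorphism of parameter intervals (the common image being a $C^1$ one-manifold), orientation-reversing if need be.

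So everything reduces to the density of $G$ in $R$, equivalently: if $s_0\in R$ and $\beta:=\gamma([s_0-\varepsilon,s_0+\varepsilon])$ is an embedded arc, then some subarc of $\mathrm{int}\,\beta$ is cleanly covered. I would argue in three steps. (1) Identify $\mathrm{int}\,\beta$ with an interval by a $C^1$ chart $\pi$, extend $\pi$ to a $C^1$ function near $\beta$, and note that the composite with $\gamma$ is $C^1$ near $\gamma^{-1}(\beta)$ with vanishing derivative on $Z$; by the elementary one-dimensional Sard theorem its set of critical values is null, so $\gamma(Z)\cap\beta$ is nowhere dense in $\beta$ and one may pass to a subarc $\beta_1$ disjoint from $\gamma(Z)$, over which every $\gamma$-fibre is finite and consists of regular points. (2) The times at which $\gamma$ \emph{runs along} $\beta$, i.e.\ maps some neighbourhood into $\beta$, form an open set each of whose components $\gamma$ maps monotonically onto a subarc; the key lemma here is that an embedded strand which meets \emph{every} point of a subarc must do so along an interval of times---a continuous bijection of a closed subset of an interval onto an arc is a homeomorphism, and an arc is connected---hence runs along that subarc. (3) A Baire category argument applied to the sets $\{p\in\beta_1:\#\gamma^{-1}(p)=n\}$ then produces a subarc through each of whose points the only strands are the finitely many that run along $\beta$, i.e.\ a cleanly covered subarc.

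The crux, and the step I expect to fight hardest with, is the density of $G$ in $R$, and inside it step (3): controlling strands that touch $\beta$ without running along it. Such tangential touches can occur at infinitely many points accumulating at a point of $\beta$ (witness $u\mapsto(u,u^{3}\sin^{2}(1/u))$), so some care is needed to rule out their filling a whole subarc. This is where the $C^1$ hypothesis---rather than continuity or bounded variation---enters essentially, through the interplay of the Sard-type nullity, the embedding lemma of step (2), and Baire category.
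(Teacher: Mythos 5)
Your decomposition via clean neighbourhoods, once the definition is tightened as noted below, coincides with the paper's $A_n=B_n^\circ$, $B_n=\{t\notin C':\#\gamma^{-1}(\gamma(t))=n\}$ with $C'=\gamma^{-1}(\gamma(C))$, and reducing the substantive half of (c) to the density of $G$ in $R$ is exactly the right move; (a), the first half of (c), and the open--closed patching for (d) also line up with the paper. Where you diverge is the density argument itself. The paper stays on the domain side: Sard gives $\dot\gamma\equiv 0$ on interiors of components of $C'$, then a sequential-compactness claim (around any $s\in B_m$ there is a $\delta$ with $(s-\delta,s+\delta)$ disjoint from $\bigcup_{n>m}B_n\cup C'$) encodes upper semicontinuity of the multiplicity and a finite induction rules out intervals inside $A_0\setminus C'$. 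You move to the image side: shrink to $\beta_1$ missing $\gamma(Z)$ (the same Sard input), then use compactness of $\gamma^{-1}(\beta_1)$ and Baire to locate a cleanly covered subarc. Both hinge on Sard plus compactness, but yours reads more like a covering-space argument, which is a genuinely different and perhaps more transparent organization.

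Two points need repair. First, your notion of clean requires $\gamma|_J$ to be a regular embedding but lets the other sheets $J'$ of $\gamma^{-1}(\gamma(J))$ be mere homeomorphisms onto $\gamma(J)$. In (b) you assert such a $J'$ is ``again clean,'' but a homeomorphic $C^1$ parametrization of a $C^1$ arc can have $\dot\gamma=0$ somewhere, so regularity on $J'$ is not automatic; without it, your $G$ might even differ from the paper's $\bigcup_{n>0}A_n$. Require instead that $\gamma$ be a regular embedding on \emph{every} sheet, equivalently $\gamma^{-1}(\gamma(J))\cap Z=\emptyset$; your construction already produces such intervals, and then (b) and the openness of $G$ go through. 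Second, Baire applied to the level sets $E_n=\{p\in\beta_1:\#\gamma^{-1}(p)=n\}$ is not sufficient as stated: a subarc with constant fibre cardinality can still carry tangential strands, since one strand may contribute $0$, $1$, or $2$ preimages to different nearby points while others compensate, so constancy of $\#\gamma^{-1}$ does not force every strand to run along. A version that works: cover the compact set $\gamma^{-1}(\beta_1)$ by finitely many open intervals $J_1,\dots,J_N$ on each of which $\gamma$ is a regular embedding, shrink to $\overline{J_i'}\subset J_i$ still covering, and set $K_i=\gamma^{-1}(\beta_1)\cap\overline{J_i'}$ and $L_i=\gamma(K_i)$. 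Then $\beta_1=\bigcup_i L_i$ is a finite union of compacts, the union of all the $\partial L_i$ together with those $L_i$ that are nowhere dense is closed and nowhere dense in $\beta_1$, and at any $p_0$ in its complement your step (2) shows that $\gamma^{-1}(\alpha)$, for a small closed subarc $\alpha\ni p_0$, is a finite disjoint union of closed intervals each mapping homeomorphically onto $\alpha$. That yields the cleanly covered subarc and hence the density of $G$ in $R$.
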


\begin{proof}

Let $C$ be the set of all critical points of $\gamma$, and let $C' = \gamma^{-1} (\gamma(C))$. Note that $C'$ is closed and that it contains all the points $t \in I$ such that $\gamma^{-1}(\gamma(t))$ has infinite cardinality. For assume that $t_0$ is such a point, then this means that $\gamma^{-1}(\gamma(t_0))$ contains a limit point $t_1$. It is obvious that $\dot{\gamma}(t_1) = 0$, i.e. $t_1 \in C$. It follows that $t_0 \in \gamma^{-1}(\gamma(t_1)) \subset C'$.

Also, $\dot{\gamma}(t) = 0$ for $t$ in the interior of any connected component of $C'$. To see this, assume that there is $t_0 \in C'$ with $\dot{\gamma}(t_0) \neq 0$ and for some $\delta >0$, $(t_0 - \delta, t_0 + \delta) \subset C'$. It follows that $\gamma$ is an embedding in a neighborhood of $t_0$, $(t_0 - \delta', t_0 + \delta')$. This in particular implies that the 1-dimensional Hausdorff  measure of $\gamma(C')$ is strictly greater than 0,  since $\gamma(C')$ contains $\gamma( (t_0 - \delta', t_0 + \delta'))$, a set diffeomorphic to an interval. However, $\gamma(C') = \gamma(C)$, and the 1-dimensional Hausdorff measure of $\gamma(C)$ is equal to 0 by the strong version of Sard's theorem \cite{sard}. Therefore there are no open intervals contained in $C'$ where $\dot{\gamma}$ does not vanish.

It follows from above that if $t \notin C'$, then $\gamma^{-1}(\gamma(t))$ is a finite set. For any $n > 0$, let $B_n = \{ t \notin C' : \gamma^{-1}(\gamma(t)) \quad \textrm{has cardinality} \quad n \}$. Let $A_n = B_n^\circ$ (the interior) for $n>0$ and let $A_0 = I - \bigcup_{n=1}^\infty A_n$. It is clear that the $A_n$'s are mutually disjoint and that (a) holds. 

Let us prove (c). By construction, we have that $\dot{\gamma}(t) \neq 0$ if $t \in A_n$ for $n>0$. To prove the rest of (c) assume that there is a point $t_0 \in A_0^\circ$ such that $\dot{\gamma}(t_0) \neq 0$. Since $C'$ does not contain intervals on which $\dot{\gamma}$ does not vanish, and since $C'$ is closed, it follows that $A_0 - C'$ contains an open interval. We will be done if we show that $A_0 - C'$ does not contain open intervals. To see this, note that $A_0 - C' = \bigcup_{n=1}^\infty (B_n - B_n^\circ)$ and assume that $t_0 \in B_m$. We need the following claim:

\textit{
If $s \in B_m$ then there is a $\delta>0$ such that $(s-\delta, s + \delta) \cap \Big ( \bigcup_{n = m+1}^\infty B_n \cup C' \Big )= \phi.$
}

Assuming this statement for the moment, note that it implies that if there is an open interval which is contained in $A_0 - C'$, then it is in fact contained in $ \bigcup_{n=1}^m (B_n - B_n^\circ)$. This interval cannot contain points from $B_1$, since in view of the claim above it would contain a subinterval contained in $B_1$, and thus in $B_1^\circ$ which is a contradiction. Similarly, it cannot contain points from $B_2, B_3, \dots, B_m$, for the same reason. Thus such an interval cannot exist and so $A_0 - C'$ contains no open intervals.

Now, to prove the claim above, assume the converse, then there is a sequence of points $\{s_k \}_{k=1}^\infty \subset \bigcup_{n = m+1}^\infty B_n \cup C' $ which converges to $s$. Since $C'$ is closed, we can assume without loss of generality that this sequence is in fact contained in $\bigcup_{n = m+1}^\infty B_n$. It follows that $\gamma^{-1}(\gamma(s_k))$ always has cardinality at least equal to $m+1$. For every $k$ select $m+1$ points from $\gamma^{-1}(\gamma(s_k))$. In this way, we get $m+1$ sequences in $I$. Passing to subsequences we can assume that they all converge to $L_1, \dots L_{m+1}$ respectively. Moreover, these limits must in fact be different from each other, since otherwise it would mean that one of these limits is in $C$ and thus in $C'$, but this would imply that $s \in C'$ since $\gamma(L_1) = \dots = \gamma(L_{m+1}) = \gamma(s)$. Thus $L_1, \dots, L_{m+1}$ are all different from each other. But this means that $\gamma^{-1}(\gamma(s))$ has cardinality at least equal to $m+1$, which contradicts the fact that $s \in B_m$. Thus the claim above is true and the proof of (c) is complete.

It would follow from the definitions of $B_n$ and $A_n$, that (b) holds if we manage to show that if $t_1 \in B_n^\circ$, then every element of $\gamma^{-1}(\gamma(t_1)) \in B_n^\circ$. To see that this is true, let $\{t_1, t_2, \dots, t_n \} = \gamma^{-1}(\gamma(t_1)) $. By construction we have that $\dot{\gamma}(t) \neq 0$ if $t \in B_n$. It follows that there are positive $\delta_1, \delta_2, \dots, \delta_n$ such that $\gamma$ is an embedding when restricted to $(t_i - \delta_i , t_i + \delta_i), i =1, \dots, n$, and $(t_1 - \delta_1 , t_1 + \delta_1) \subset B_n$. We now make the following claim:

\textit{There is a positive $\delta < \delta_1$ such that $\gamma$ restricted to $(t_2 - \delta_2, t_2 + \delta_2)$ is onto the set $\gamma\big ( (t_1 - \delta, t_1 + \delta) \big ). $ } 

To see that this is the case, assume the converse. Then, there is a sequence $\{ s_k \}_{k=1}^\infty \subset (t_1 - \delta_1, t_1 + \delta_1)$ such that $\gamma^{-1}(\gamma(s_k)) \notin (t_2 - \delta_2 , t_2 + \delta_2)$ for all $k$. Since $\{ s_k \}_{k=1}^\infty \subset B_n$, it follows that for every $k$ there is at least one element of $\gamma^{-1}(\gamma(s_k))$ outside the set $\bigcup_{i=1}^n (t_i - \delta_i, t_i + \delta_i)$. This sequence of elements has a subsequence which converges to a point $t_{n+1} \notin \{t_1, \dots, t_n \}$, and continuity of $\gamma$ implies that $\gamma(t_{n+1}) = \gamma(t_1)$. But this would mean that $\gamma^{-1}(\gamma(t_1))$ has cardinality at least equal to $n+1$ which contradicts the fact that $t_1 \in B_n$. Thus the claim above is true.

Using the fact that $\gamma$ restricted to $(t_2 - \delta_2, t_2 + \delta_2)$ is an embedding, we have that the map $\gamma^{-1} \circ \gamma : (t_1 - \delta, t_1 + \delta) \to (t_2 - \delta_2, t_2 + \delta_2)$ is $C^1$, takes $t_1$ to $t_2$ and has a non-vanishing derivative. Thus the image of $(t_1 - \delta, t_1 + \delta)$ contains an open interval containing $t_2$, which in turn means that there is an open interval around $t_2$ which is contained in $B_n$. Thus $t_2 \in B_n^\circ$. Needless to say, the same argument applies to $t_3, t_4, \dots, t_n$, and the proof of (b) is complete. Note that we have proven in fact that if $t_1, t_2 \in A_n$ and $\gamma(t_1) = \gamma(t_2)$, then there are $\delta, \delta' >0$ such that $\gamma( (t_1 - \delta, t_1+ \delta)) = \gamma(( t_2 - \delta', t_2 + \delta'))$

It remains to prove (d). Since $A_n$ is an open subset of $I$, then it is equal to a countable union of disjoint open intervals. Let $(a,b)$ be one such interval. Then $\gamma$ is in fact injective on this interval. To see this, suppose that $t_1, t_2 \in (a,b)$ with $\gamma(t_1) = \gamma(t_2)$. We know that $\gamma$ is injective on a neighborhood of $t_1$. Thus, let $t'_2$ be the largest of the points in $[t_1, t_2]$ such that $\gamma$ is injective on $[t_1, t_2')$. Let $t_1'$ be the point in $[t_1, t_2')$ for which $\gamma(t_1') = \gamma(t_2')$. Note, that $\gamma$ is, by construction, injective on $(t_1', t_2')$. Consider the set $D$ consisting of all the points $t \in (a,b)$ such that $\exists \, s \in [ t'_1 , t'_2 ]$ with $\gamma(t) = \gamma(s)$. This set is clearly closed in the subspace topology, being the preimage of the closed set $\gamma([t'_1, t'_2])$. This set is also open. To see this, note that if $\gamma(s) = \gamma(t)$ and $s \in (t'_1, t'_2)$, then we have an open interval around $t$ whose image under $\gamma$ coincides with the image of an interval around $s$ which may be taken to be a subset of $(t'_1, t'_2)$. Therefore, this interval around $t$ is a subset of $D$. Suppose now that $\gamma(t) = \gamma(t_1') = \gamma(t_2')$, then we know that there is an open interval around $t$ whose image under $\gamma$ coincides with the image of an open interval around $t_1'$ and with the image of an open interval around $t'_2$. Then the image of half of the interval around $t$ (either the one less or equal or the one bigger or equal to $t$) coincides with the image of the half of the interval around $t_1'$ which is contained in $[t_1', t_2']$ while the other half of the interval around $t$ coincides with the image of the half of the interval around $t_2'$ which is contained in $[t_1', t_2']$ (We used injectivity of $\gamma$ on $(t_1', t_2')$ in a crucial way here). Therefore, the image of the interval around $t$ coincides with the image of a subset of $[t_1', t_2']$ and thus this interval around $t$ is contained in $D$.

We thus have that $D$ is both open and closed in $(a,b)$ and thus, being nonempty, is in fact equal to the whole set. Consider now a sequence $\{t_n \}_{n=1}^\infty$ converging to $a$. Then there is a sequence $\{s_n \}_{n=1}^\infty \subset [t_1', t_2']$ whose image under $\gamma$ coincides with the image of $\{t_n \}_{n=1}^\infty$. Passing to a subsequence and taking the limit we get a point in $[t_1', t_2']$ whose image under $\gamma$ is equal to $\gamma(a)$. We thus have that $a \in A_n$, which contradicts (b).

Summarizing the above, we have shown that $\gamma$ restricted to $(a,b)$ is an injective immersion. To see that it is an embedding, note that $\gamma(a), \gamma(b) \in A_0$. We now have two cases:

\begin{itemize}
\item[i-] $\gamma(a) \neq \gamma(b)$. In this case $\gamma$ restricted to $[a,b]$ is a topological embedding (being an injective continuous map from a compact space to a Hausdorff one).
\item[ii-] $\gamma(a) = \gamma(b)$. In this case $\gamma$ restricted to $[a,b]$ is a topological embedding of $S^1$.
\end{itemize}

It follows that in both cases $\gamma$ restricted to $(a,b)$ is a smooth embedding. 

To finish, assume that $t_1 \in (a_1, b_1)$ and $t_2 \in (a_2, b_2)$, with $(a_1, b_1),$ $ (a_2, b_2)$ being two intervals making up $A_n$, with $\gamma(t_1) = \gamma(t_2)$. Consider the set of points in $(a_1, b_1)$ whose image under $\gamma$ is contained in the image of $(a_2, b_2)$. Again, this set is closed (in the subspace topology of $(a_1, b_1)$, being the preimage of $\gamma( [ a_2, b_2])$), as well as open (by the fact above about the existence of intervals with coinciding images). Thus $\gamma((a_1, b_1)) \subset \gamma((a_2, b_2))$, and obtaining the opposite inclusion via a similar argument we arrive at the fact that $\gamma((a_1, b_1)) = \gamma((a_2, b_2))$. We thus have two embeddings of intervals whose images coincide. Since these are 1-dimensional manifolds, the two embeddings are related by a change of parametrization and possibly a switch in orientation. \end{proof}

The above theorem shows that any $C^1$ curve behaves in a rather simple way once $A_0$ is ignored. This removed set is small topologically in the sense that it is sufficient to know the definition of the curve on its complement to be able to extend it uniquely by continuity to the whole interval. Note however, that neither $A_0$ nor its image can be assumed to be small in the measure-theoretic sense. In fact, the Hausdorff dimensions of these two sets may be equal to one.

Using the above theorem we can associate a transfinite word to any $C^1$ curve. Fix an orientation for each one of the disjoint arcs forming the curve. Let $T$ be the set of all the intervals making up $\bigcup_{n=1}^\infty A_n$ with linear order inherited from $\mathbb{R}$. Choose a countable alphabet $A$, one letter for each one of the arcs. The word associated to the curve is just the map taking every element of $T$ either to the letter corresponding to the arc or to its inverse depending on whether the arc is traversed by $\gamma$ along the chosen orientation when restricted to this element of $T$ or oppositely. This set of maps can be multiplied and reduced in essentially the same way as the set of finite words \cite{hawaii}. 

We can now make an important

\begin{definition}
A `whisker' is a curve whose reduced word is trivial (empty).\footnote{The name `whisker' is motivated by the geometry of the curve whose word is $abb^{-1}c$.} A curve `has whiskers' if its word is different from its reduced word.
\end{definition}

It is easy to see that a transfinite word reduces to a trivial one if and only if every finite truncation of it (i.e. the word obtained by keeping only finitely many of the letters) is reducible. Moreover, this is equivalent with the statement that there is a pairing between any letter in the word with its inverse such that if $a$ appears before $b$ in the word then the $a^{-1}$ that is paired with it appears after the corresponding $b^{-1}$.  

Before we state the next theorem, let us formally define thin homotopy and trees:

\begin{definition}
\label{def:thin}
If $\gamma_1$ and $\gamma_2$ are two curves such that $\gamma_1(0) = \gamma_2(0), \gamma_1(1) = \gamma_2(1), \dot{\gamma}_1(0) = \dot{\gamma}_2(0) = \dot{\gamma}_1(1) = \dot{\gamma}_2(1) =0$, then we say that these two curves are thinly homotopic if there is a map $H: I \times I \to M$ such that 
\besn
H(t,0) & = & \gamma_1 (t) \\
H(t,1) & = & \gamma_2 (t)\\
H(0,s) & = & \gamma_1 (0) = \gamma_2(0) \\
H(1, s) & = & \gamma_1 (1) = \gamma_2 (1)\\
\frac{\partial H (t,0)}{\partial s} = \frac{\partial H (t,1)}{\partial s} &=&  \frac{\partial H (0,s)}{\partial t}  = \frac{\partial H (1,s)}{\partial t}   = 0\\
 rank(H) &\leq & 1
\eesn
Intuitively it is a homotopy which `sweeps' zero area and which `stops', `comes to a halt' at the edges.
\end{definition}

\begin{definition}
\label{def:tree}
Let $l^1$ be the space of absolutely convergent sequences with its natural norm. A tree is a compact, path-connected, simply connected subspace of $l^1$ consisting of a closed, totally disconnected set whose elements are called vertices, and a countable collection of straight line segments called edges. All the points in an edge have all coordinates constant but one. Moreover, this special coordinate for one edge is different from that for any other edge. The endpoints of any edge are contained in the set of vertices, the interiors of the edges are disjoint from each other and from the vertices. Note that according to this definition, a tree is not necessarily a $CW$-complex. We shall assume that we have a preferred vertex which will be called the root of the tree. 
\end{definition}

 We are now ready to state our next

\begin{theorem} \label{theorem:huge}
A curve with whiskers is thinly homotopic to a curve without.
\end{theorem}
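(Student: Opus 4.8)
The plan is to use Theorem~\ref{theorem:words} to turn the statement into a combinatorial one about the transfinite word of the curve, to isolate the cancelling excursions — the whiskers themselves — and to contract them by explicit reparametrisation homotopies; essentially all of the work then lies in checking that the resulting homotopy is $C^1$.

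First I would fix, for the given curve $\gamma$ with whiskers, the canonical non-crossing (nested) pairing $\mathcal{M}$ of the cancelling letters of its word with their inverses that arises from the reduction — so that the unpaired letters carry the reduced word $w_{\mathrm{red}}$, and the sub-word strictly between the two letters of any pair is balanced, i.e.\ reduces to the trivial word. A pair $(t_1,t_2)\in\mathcal{M}$, with $t_1=(\sigma_1,\beta_1)$ and $t_2=(\alpha_2,\beta_2)$, consists of two intervals on which $\gamma$ traverses one and the same arc $e$ with opposite orientations. Reading off the endpoints of $e$ gives $\gamma(\sigma_1)=\gamma(\beta_2)$ and $\gamma(\beta_1)=\gamma(\alpha_2)$, and — the identity that turns out to matter most — $\dot\gamma(\sigma_1)=-\dot\gamma(\beta_2)$, since $e$ is left from its base point with velocity $\dot\gamma(\sigma_1)$ and is re-entered in reverse, arriving back at that base point with velocity $\dot\gamma(\beta_2)$. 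In particular $\gamma|_{[\sigma_1,\beta_2]}$ is a whisker. Passing to the \emph{outermost} members of $\mathcal{M}$, one obtains countably many pairwise disjoint closed subintervals $J_i=[\sigma^{(i)},\beta^{(i)}]$ of $I$ on each of which $\gamma$ restricts to a whisker, and such that the arcs of $\gamma$ lying outside $\bigcup_i J_i$ are precisely the arcs of $w_{\mathrm{red}}$.

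The heart of the proof is the contraction of these whiskers. For each $i$ I would build a $C^1$ family $\psi^{(i)}_s\colon I\to I$, $s\in[0,1]$, equal to the identity outside a small neighbourhood of $J_i$, with $\psi^{(i)}_0=\mathrm{id}$, and with $\psi^{(i)}_1$ equal to the constant $\sigma^{(i)}$ on $J_i$ and with vanishing derivative at the endpoints of $J_i$; the manner in which $\psi^{(i)}_s$ eats the whisker on $J_i$ is dictated by a height function $h_i\colon J_i\to[0,\infty)$ read off from the bracketing of the balanced word of $\gamma|_{J_i}$ — it increases while an arc is traced away from the base point, decreases while that arc is retraced, is constant on the part of $J_i$ lying in $A_0$, and vanishes on $\partial J_i$ — via the usual recipe ``at time $s$, retreat to height at most $(1-s)\max h_i$''. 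Because the two arcs paired by $\mathcal{M}$ have literally the same image in $M$, this retreat takes place inside the one-dimensional set $\gamma(J_i)$. Since the $J_i$ are disjoint, all of these modifications can be done simultaneously: let $\psi_s$ agree with $\psi^{(i)}_s$ near $J_i$ and be the identity elsewhere, and put $H(t,s)=\gamma(\psi_s(t))$, the $s$-dependence having been smoothed near $s=0$ and $s=1$. Then $H(\cdot,0)=\gamma$; the curve $\gamma'=H(\cdot,1)$ traverses exactly the arcs of $w_{\mathrm{red}}$, and — because it has been made to come to a halt wherever a $J_i$ used to be — Theorem~\ref{theorem:words} applied to $\gamma'$ recovers precisely those arcs, so the word of $\gamma'$ is $w_{\mathrm{red}}$, which is reduced: $\gamma'$ has no whiskers. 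The condition $\mathrm{rank}(H)\le1$ is automatic, since $dH=\dot\gamma\,d\psi$ has rank at most one everywhere, and the boundary conditions of Definition~\ref{def:thin} hold because each $\psi_s$ fixes $0$ and $1$ and $\dot\gamma$ vanishes at the endpoints of $I$.

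The step I expect to be the real obstacle is verifying that $H$ — and with it $\gamma'$ — is genuinely $C^1$, and this happens at two kinds of points. At an endpoint $\sigma^{(i)}$ of a single whisker, $\dot\gamma$ need not vanish, so collapsing $J_i$ naively would destroy differentiability; this is exactly why $\psi^{(i)}_1$ must ``come to a halt'' on $\partial J_i$, which forces $\gamma'$ to slow to zero speed at $\gamma(\sigma^{(i)})$ — and this is consistent only because $\dot\gamma(\sigma^{(i)})=-\dot\gamma(\beta^{(i)})$, so the one-sided velocities of the unmodified $\gamma$ at the two ends of $J_i$ are negatives of one another and both must be matched to $0$. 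At an accumulation point $t^{\ast}$ of the intervals $J_i$ one must first observe that $\dot\gamma(t^{\ast})=0$: along a subsequence with $J_{i_k}\to t^{\ast}$ the lengths tend to $0$, so $\sigma^{(i_k)}$ and $\beta^{(i_k)}$ both tend to $t^{\ast}$, and continuity together with $\dot\gamma(\sigma^{(i_k)})=-\dot\gamma(\beta^{(i_k)})$ forces $\dot\gamma(t^{\ast})=-\dot\gamma(t^{\ast})$; one then has to check that the infinitely many shrinking local modifications glue to a map that is $C^1$ with vanishing derivative at $t^{\ast}$, for which a uniform bound on $|\psi_s'|$ — which must be arranged when constructing the $\psi^{(i)}_s$ — is exactly what is needed. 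I expect this $C^1$ bookkeeping, rather than the contraction idea itself, to absorb most of the argument.
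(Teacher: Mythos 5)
There is a genuine gap, and it is structural. You insist on a homotopy of the form $H(t,s)=\gamma(\psi_s(t))$ for a $C^1$ family of self-maps $\psi_s\colon I\to I$ which are the identity outside a small neighbourhood of $J_i$, with $\psi_1$ constant on $J_i$. Since $\psi_1$ fixes a neighbourhood of $\partial I$ it is surjective, and a continuous monotone surjection $\psi_1$ that is constant equal to $\sigma^{(i)}$ on $J_i=[\sigma^{(i)},\beta^{(i)}]$ and equal to the identity at $\beta^{(i)}+\delta$ must sweep through the entire interval $(\sigma^{(i)},\beta^{(i)})$ on the short transition piece to the right of $\beta^{(i)}$. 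Consequently $\gamma\circ\psi_1$ still traverses the whole whisker; it has merely been compressed onto a new parameter interval, and the word of $\gamma\circ\psi_1$ is not the reduced word. If instead you try to implement the recipe ``retreat to height $\le(1-s)\max h_i$'' literally as a map $\psi_s\colon I\to I$, the resulting $\psi_s$ has a jump discontinuity at each tip of the whisker (each local maximum of $h_i$), where it jumps from the left to the right boundary of the superlevel set $\{h_i>(1-s)\max h_i\}$; so it is not continuous, let alone $C^1$. A reparametrisation homotopy cannot remove whiskers in principle. The paper avoids this by not writing the homotopy in the form $\gamma\circ\psi_s$ at all: it first factors $\gamma=f\circ\tilde\gamma$ through a genuine tree $T\subset l^1$ (Step 1), and then sets $H(t,s)=f(\chi(\tilde\gamma(t),s))$ where $\chi$ is a deformation retraction of $T$ onto its root. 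Making this $C^1$ then requires exactly the two pieces of machinery you do not supply: the retraction $\rho_x$ is built so as to have vanishing derivative at the arclengths of all vertices along the ray to $x$ (otherwise $\partial H/\partial s$ jumps whenever the retraction passes a branch point), and Step~2 first deforms $\gamma$ so that $\dot\gamma$ vanishes at every parameter mapping to a vertex.

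Your second load-bearing claim is also false: $\dot\gamma(\sigma_1)=-\dot\gamma(\beta_2)$ does not hold, because the two arcs paired by the word share an image in $M$ but need not be traversed at matching speeds. For $\gamma(t)=t^2(1-t)$ on $[0,1]$ the decomposition of Theorem~\ref{theorem:words} gives $A_2=(0,2/3)\cup(2/3,1)$ with word $aa^{-1}$, $\sigma_1=0$, $\beta_2=1$, yet $\dot\gamma(0)=0$ while $\dot\gamma(1)=-1$. This invalidates your accumulation-point argument that $\dot\gamma(t^\ast)=-\dot\gamma(t^\ast)$; the correct statement (which the paper uses in Step~4) is that each whisker contains a tip, hence a critical point, so any accumulation point of the $J_i$ is a limit of critical points and lies in $C$. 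It also removes the basis for your claim that the $C^1$ matching at $\partial J_i$ is automatic; the mismatch of one-sided velocities is precisely what Step~2 of the paper is designed to fix.
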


\begin{proof}
We proceed in four steps:

\textit{Step 1: Any whisker factors through a tree. More precisely, there is a tree $T$ and two maps $\tilde{\gamma}: I \to T$ and $f: T \to M$, such that $\gamma = f \circ \tilde{\gamma}$ with $\tilde{\gamma}$ continuous and $f$ Lipschitz with constant 1.}  

Consider the word that corresponds to the curve. We know that every letter appears a finite number of times together with its inverse. Replace any letter which is repeated $n$ times with $n$ different letters, making sure to preserve the pairing between the letter and its inverse. Of course, with this every letter appears precisely twice (once itself and once as an inverse). Whenever needed, interchange the letter with its inverse so that the letter appears first. Of course, the resulting word is still reducible.

Consider now the following geometric arrangement:

Embed $I$ in the closed upper half plane in the natural way (on the $x$-axis). For any open interval in one of the sets $\{ A_n \}_{n=1}^{\infty}$, we know that there is another open interval (in the same $A_n$ in fact) such that if the first one corresponds to a letter then the second one corresponds to the inverse of the letter. Associate to every such pair the open set contained between the semicircle in the upper half plane based at the outer endpoints of the two intervals and the semicircle based at the inner endpoints. We define a partial order on these open semi-annuli by saying that the one before is the one `above'.\footnote{More precisely, we could say that the one before is the one which if combined with its reflection across the $x$-axis will contain the other one.} Figure \ref{fig:annuli} should make this clear. We will often use the word `above' instead of `before' or `preceds' and the word `below' instead of `later', `after' or `succeeds'.

\begin{figure}
\begin{center}
\includegraphics[width = 0.8\textwidth]{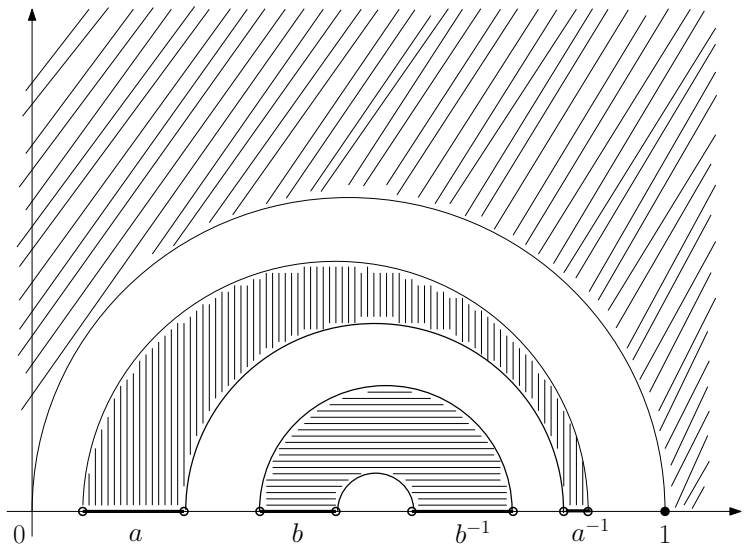}
\caption{Three semi-annuli are shown. If we denote the vertically shaded one by $x$, the horizontally shaded one by $y$ and the diagonally shaded region by $z$, then $z$ preceds $x$ which is succeeded by $y$. In fact, $z$ is the minimal semi-annulus. Note that all the three regions are open sets in the subspace topology of the closed upper half-plane.}
\label{fig:annuli}
\end{center}
\end{figure}

Perform this for every pair of intervals. Note that the sets obtained for different pairs do not intersect. Take now the set of all these semi-annuli as well as the set $\{(x,y) \in \mathbb{R}^2 : y \geq 0 \, , \, (x-\frac{1}{2} )^2 + y^2 > \frac{1}{4} \}$. Extend the partial order on the semi-annuli to include this set by making it the minimal element. We will also call this minimal element a semi-annulus. Let us analyze how a path connected component of the complement of this set would look like.

Let $V$ be such a component. Consider the set of all semi-annuli above it.\footnote{Here we are abusing the terminology slightly, since the partial order was only defined for semi-annuli, but the meaning of this statement should be clear.}  If there is a latest one, whose smaller boundary circle has equation $(x-x')^2 + y^2 = \rho^2$ with $(x',0)$ being its center and $\rho$ its radius, then $V$ can contain only points satisfying $(x-x')^2 + y^2 \leq \rho^2$. If, on the other hand, there is no latest one, consider the net of inner semi-circles of these semi-annuli (the index set of the net being the semi-annuli under consideration with their partial order). For each element in this net, consider the three nets which are the net of the left endpoints, right endpoints and radii of the semicircles. Since the first two are monotone, they converge and thus the third one converges too and together the limits of all three define a semi-circle. This is because if these three nets are $l_{\alpha}, r_{ \alpha}, \rho_{\alpha}$, then they satisfy $2 \rho_{\alpha} = r_{\alpha} - l_{\alpha}$, and so, their limits will also satisfy this relation. If $x' = \frac{1}{2} \lim_{\alpha} (r_{\alpha} + l_{\alpha})$ and $\rho = \lim_{\alpha} \rho_{\alpha}$, then it is easy to see that $V$ can only contain points satisfying $(x-x')^2 + y^2 \leq \rho^2$.  

Now, if there is no semi-annulus which is contained in $(x-x')^2 + y^2 \leq \rho^2$, then $V$ is in fact equal to this set (restricted to nonnegative $y$'s of course). For reasons which will be apparent later we shall call such $V$'s `tip' regions.

\begin{figure}
\begin{center}
\includegraphics[width=1\textwidth]{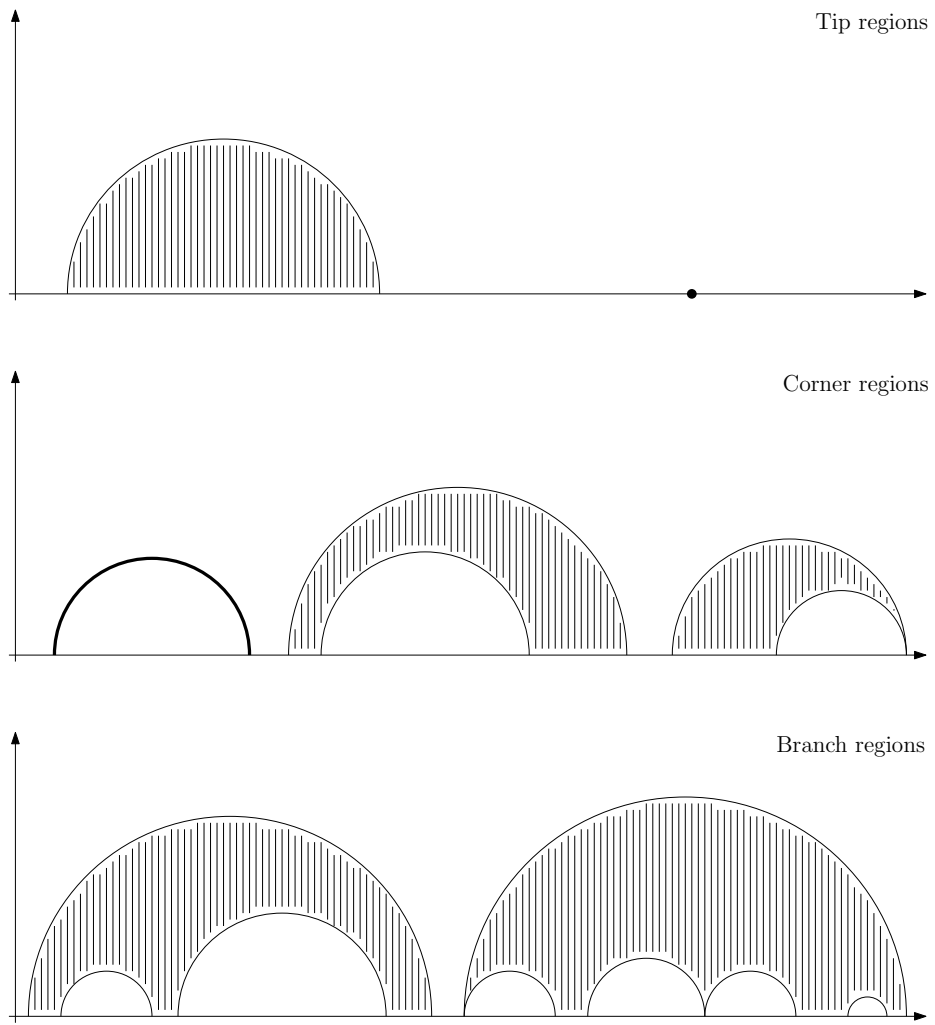}
\caption{This is a drawing of the different vertex regions. All the regions are closed and so the boundaries are assumed to be included. Note that a tip region can be just a point on the $x$-axis, while a corner region may be just a circle. The shown possibilities are not exhaustive.}
\label{fig:regions}
\end{center}
\end{figure}

If there is a semi-annulus which is contained in $\{ (x,y) : y \geq 0 \, , \, (x-x')^2 + y^2 \leq \rho^2\}$, consider all semi-annuli which precede it but which succeed all the semi-annuli preceding $V$. Performing the same analysis as above, either taking a minimal semi-annulus (with respect to the partial order) or taking a net with it being ordered oppositely to the semi-annuli, we can see that $V$ can only contain points which satisfy $(x-x_1)^2 + y^2 \geq \rho_1^2$ for some $x_1$ and $\rho_1$.

Now, if there are no semi-annuli contained in $\{ (x,y) : y \geq 0 \, , \, (x-x')^2 + y^2 \leq \rho^2 \}$ which do not succeed $(x-x_1)^2 + y^2 = \rho_1^2$ then of course $V$ is the intersection of $\{ (x,y) : y \geq 0 \, , \, (x-x')^2 + y^2 \leq \rho^2 \}$ and $\{ (x,y) : y \geq 0 \, , \, (x-x_1)^2 + y^2 \geq \rho_1^2 \}$. Such $V$'s will be called `corner' regions.

Finally, in the other case (i.e. when there are semi-annuli contained in $\{ (x,y) : y \geq 0 \, , \, (x-x')^2 + y^2 \leq \rho^2 \}$ etc.) repeat the procedure which led to $(x-x_1)^ 2 + y^2 = \rho_1^2$ starting from another semi-annulus and obtain another semicircle $(x-x_2)^2 + y^2 = \rho_2^2$ such that $V$ is constrained to lie above it. Note  that the new semi-circle is disjoint from the first one. Proceeding in this fashion, we arrive at a (possibly countable) collection of semicircles such that $V$ is the region above them. Such $V$'s will be called `branch' regions. Note that the set of branch regions is countable, due to the fact that each such region has a nonempty interior. This is a technical point which will be relevant later.

The reader should consult figure \ref{fig:regions}  for drawings of the different cases. We shall call the tip, corner and branch regions collectively `vertex' regions. We shall extend the partial order defined on the semi-annuli to the vertex regions in the obvious way as we have done above. Note that there is a minimal (in the set of vertices) vertex region: it is the region which is bounded from above by the minimal semi-annulus. We shall call this vertex region, the root region.

It is not difficult, though a little tedious, to see that, for each of the cases above, the curve takes all the points of $V$ which lie on the $x$-axis to the same image. This follows from the fact that the curve restricted to an interval in $A_n$ traverses the same image as when restricted to the interval which is paired with it, from the fact that the derivative of the curve vanishes in the interior of $A_0$ (the complement of $\bigcup_{n=1}^\infty A_n$) and from continuity (this is needed when taking limits of the nets).

Before we define our tree, we still need to make some further refinements. Consider any point in $A_0$. Then this point naturally splits the word associated to the curve into two parts, the one `before' and the one `after'. Note that both the words before and after are the same for all the points in $A_0$ which are in the same connected component, and thus we can talk about the words before and after a component of $A_0$. Given a word, we can talk about an initial/final segment of it, which is defined to be the restriction of the map defining the word to an initial/final segment of the linearly ordered set defining this word. Also, note that the intersection of any corner region with the $x$-axis always has two connected components. Using the above we can now distinguish two types of corner regions:

\begin{itemize}
\item `Spurious' : These are the ones which satisfy the following two conditions:
\begin{itemize}
\item Let us say the two components of this region on the $x$-axis are $C_1$ and $C_2$ with $C_1$ being to the left of $C_2$. Then there is a final segment of the word before $C_1$ which is paired with an initial segment of the word after $C_2$. Similarly, there is an initial segment of the word after $C_1$ which is paired with the final segment of the word before $C_2$.
\item The curve is $C^1$ if parametrized by proper length in the neighborhood of the images of both $C_1$ and $C_2$.\footnote{Proper length is defined of course by $l(t) = \int_0^t \sqrt{ \dot{\gamma}^2(s)} ds$. Note that a sufficient condition for the curve to be $C^1$ as a function of the proper length is the non vanishing of $\dot{\gamma}$, but that this is not necessary.}
\end{itemize}
\item `True' : All of the rest of the corner regions.
\end{itemize}

Having classified all the corner regions into the above two classes, we redefine the decomposition of the domain of the curve by fusing all those paired intervals which are interspersed by the (restriction to the $x$-axis of the) spurious corner regions. More precisely, if $w$ is paired with $w^{-1}$ and all the regions between the letters of $w$ (and consequently of $w^{-1}$) are of the spurious corner type, we replace $w$ by the smallest open interval containing all the intervals corresponding to all the letters of $w$ and perform the same for $w^{-1}$. We then replace the word $w$ with one letter and $w^{-1}$ with its inverse, keeping them paired together. We proceed recursively until there are no more spurious corner regions left. Thus since we only have true corner regions left, we shall simply call them corner regions from now on.

Now, let us define the tree. It shall be embedded as a subspace of $l^1$. Let the letters appearing in the word of the curve be given by the sequence $\{ a_n \}_{n=1}^\infty$, and let the proper length of the arcs corresponding to them be $\{l_n \}_{n=1}^\infty$ (we will often abuse notation and say that `$l_i$ is the length of $a_i$' or it is `the length of the semi-annulus corresponding to $a_i$'). To any vertex region, associate the point in $l^1$ whose $i$-th coordinate is $l_i$ if the semi-annulus corresponding to $a_i$ is above this region, and is 0 otherwise. The set of all such points will be the vertices of the tree (hence the name of these regions). The edges of the tree will be in one to one correspondence with the letters of the word. The edge corresponding to $a_i$ will stretch from the vertex corresponding to the vertex region which bounds the semi-annulus associated to $a_i$ from above to the one associated to the vertex region bounding it from below along the $i$-th axis. We shall denote the tree by $T$. Note that excluding the endpoints of any edge, the rest of the edge is an open subset of $T$, and thus the set of vertices is closed in $T$. It is not hard to see that the vertices are a totally disconnected set as, for any two vertex regions, there is a semi-annulus which is above one of these regions which is not above the other one. The names for the vertex regions discussed above should be self-explanatory now, as the tip regions correspond to the tips of the tree. The corner regions correspond to corners and the branch regions correspond to the points where the tree splits into several branches. We shall call the origin of $l^1$ the root of the tree, and of course, it corresponds to the root region. Intuitively, $T$ is just the Poincar$\acute{\txt{e}}$ dual of the collection of vertex regions and semi-annuli.

Let us show that $T$ is compact. Let $\{x_n\}_{n=1}^\infty$ be a sequence of elements in $T$. We need to show that it has a convergent subsequence in $T$. It is sufficient to handle the case when this sequence has no subsequence which is contained inside an edge of $T$, for then this subsequence will converge to an element in this edge as an edge is compact. Therefore, without loss of generality, assume that no two $x_n$'s are in the same edge or vertex. Define now a sequence $\{t_n \}_{n=1}^\infty$ of elements of $I$ by letting $t_n$ be the left endpoint of the interval corresponding to the letter corresponding to the edge in the case $x_n$ belongs to the interior of an edge, while in case $x_n$ is a vertex, let $t_n$ be any of the points in the corresponding vertex region which are in $I$. The sequence $\{ t_n \}_{n=1}^\infty$ has a convergent subsequence $\{ t_{n_k} \}_{k=1}^\infty$ such that $t_{n_k} \to t_0 \in A_0$. Let $x_0$ be the vertex corresponding to the vertex region to which $t_0$ belongs. We then have that $x_{n_k} \to x_0$. The reason is that if $s_1, s_2 \in I$ are in vertex regions, then the $l^1$ distance between the vertices corresponding to these two regions is less than or equal to $|\int_{s_1}^{s_2} \sqrt{\dot{\gamma}^2(s) } ds|$ as this integral is less than or equal to the sum of the lengths of all the semi-annuli which precede one of the regions without preceding the other one. Thus $T$ is compact. Note that essentially the same proof shows that any sequence of vertices has a subsequence converging to a vertex and thus the set of vertices is compact as well.

To show that $T$ is a tree we need to show that it is path-connected and simply connected. It is obvious that if for any vertex we have a continuous path connecting the root to it, then $T$ is connected. Pick a vertex, or equivalently, a vertex region. Then there is a subcollection of semi-annuli which are above it. Let $L$ be the sum of the $l_i$'s corresponding to the letters corresponding to these semi-annuli, and define the path $\sigma(s)$ from $[0,L]$ to $T$ in the following way:

Consider the set of all the vertices preceding the vertex in question. Note that this set of points is in fact a closed subset of $T$. This is because it is equal to the intersection of two closed sets: the set of all vertices with the hyperplane which has all coordinates equal to 0 for all the semi-annuli not preceding the vertex in question.\footnote{Any other vertex region is going to be either below the vertex in question, and thus separated from it by some semi-annulus and will have the $l^1$ coordinate corresponding to it nonzero; or it is going to be `next' to it (as are the bottom two regions in figure \ref{fig:regions}) in which case it is also not difficult to see that there will be a semi-annulus above it which is not above the first vertex.} Now, for every vertex region $v$ above the vertex in question, let $s_v$ be the sum of the lengths of the semi-annuli above $v$. We thus obtain a linearly ordered set of real numbers which is closed as the map $(\alpha_1, \alpha_2, \dots) \to \alpha_1 + \alpha_2 + \dots$ is a continuous map from $l^1$ to $\mathbb{R}$. Note that this set of vertices is in fact linearly ordered and that if $v_1$ precedes $v_2$ with no other vertices in between then there is an edge whose initial point is $v_1$ and final point $v_2$. These facts are easy consequences of the geometry of the vertex regions. Let $s \in [0,L]$. If there is a $v$ such that $s_v = s$, let $\sigma(s_v) = v$. Otherwise, $s$ is in the complement of the closed set and thus belongs to an interval $(s_{v_1}, s_{v_2})$ such that there are no $s_v$'s in it. In this case, let $\sigma$ take $s$ to the edge linking $v_1$ and $v_2$, going along it (in the direction of $v_2$) a distance $s- s_{v_1}$. 

From the construction of $\sigma$ it follows easily that $| \sigma(s_1) - \sigma(s_2) | = |s_1 - s_2|$, and thus $\sigma$ is (uniformly) continuous and $T$ is path connected. 

In step 3, we shall construct an explicit deformation retraction of $T$ to its root, which will imply that $T$ is simply connected.

Having defined our tree, let us show how the curve factorizes through it.  Let us define $\tilde{\gamma} : I \to T$ first. Note that any point $t \in I$ belongs either to a vertex region or to a semi-annulus. In the case it belongs to a vertex region let $\tilde{\gamma}(t)$ be the vertex corresponding to that region. In the other case, if $t$ is in the earlier interval of the intersection of the semi-annulus with $I$, denoted by $(t_1, t_2)$, let $\tilde{\gamma}(t)$ be the point on the edge corresponding to the semi-annulus whose distance from its initial vertex (i.e. the vertex corresponding to the vertex region which contains $\{t_1 \}$) is equal to $\int_{t_1}^t \sqrt{ \dot{\gamma}^2(s)} ds$. If $t$ is in the second interval, which we will also denote by $(t_1, t_2)$, $\tilde{\gamma}(t)$ is defined via the same formula but the distance is now measured from the final vertex (the one corresponding to $t_1$). It is not hard to see that $\tilde{\gamma}$ satisfies the inequality $| \tilde{\gamma}(t_1) - \tilde{\gamma}(t_2) | \leq \int_{t_1}^{t_2} \sqrt{  \dot{\gamma}^2(s)}  ds $ from which continuity of $\tilde{\gamma}$ follows. Note that we can say that $\tilde{\gamma}(t)$ is in fact $C^1$ when $t$ maps to an interior of an edge.\footnote{When $\tilde{\gamma}$ maps into an edge, it is essentially a function into $\mathbb{R}$ as only one component is changing, and thus we can talk about its differentiability.}

Let us now define the map $f : T \to M$. For any vertex $x$, let $f(x)$ be the image by $\gamma$ of the intersection of the vertex region associated to $v$ with the $x$-axis. Let $x$ be on an edge, and suppose the distance from $x$ to the initial vertex of the edge is equal to $l$. Suppose that the left interval forming intersection of the semi-annulus corresponding to this edge with the $x$-axis is $(t_1, t_2)$. Let $f(x) = \gamma(t(x))$ where $t(x)$ is such that $\int_{t_1}^{t(x)} \sqrt{ \dot{\gamma}^2(s)  } ds = l$. It is straightforward to check that indeed, $\gamma = f \circ \tilde{\gamma}$, and that $|f(x) - f(y) | \leq |x - y|$. 

\textit{Step 2: Any whisker is thinly homotopic to a whisker which factors through the same tree and which has a vanishing derivative at every point which is mapped to a vertex.} 

Let $\mathcal{V}$ be the preimage of the set of vertices under $\tilde{\gamma}$. Since $\tilde{\gamma}$ is continuous and the set of vertices is closed, we have that $\mathcal{V}$ is closed. It is not hard to see that for any corner region at least one of its components is in $C$ (the set of critical points, where we are using the same notation as in theorem \ref{theorem:words}. Recall here that we assume that we have gotten rid of the spurious corners). 

Now, define the maps $l: I \to [0,L]$ and $\hat{\gamma} :  [0,L] \to M$, via

\besn
l(t) & = & \int_0^t \sqrt{ \dot{\gamma}^2(s)} ds, \\
\hat{\gamma} & = & \gamma \circ l^{-1},
\eesn

where of course $L$ is the total length of the curve, and $\hat{\gamma}$ is just the curve parametrized by proper length. We are abusing the notation somewhat, since $l$ is not invertible, and thus should be read as a preimage, but this does not cause any problems since $\gamma$ is of course constant on the preimage of any point by $l$. The reader should note that $\hat{\gamma}$ is a Lipschitz map (with constant 1) which is in fact $C^1$ at every point which is not a critical value for $l$ (since there $l$ is invertible).

Consider an interval $(a,b)$ forming the compelment of $C$. It is clear that $\hat{\gamma}$ is a $C^1$ diffeomorphism if restricted to $(a+\delta, b- \delta)$ for any (sufficiently small) $\delta >0$. Since any corner point is a subset of $\gamma^{-1} \big ( \gamma(C) \big )$, it is clear that the set of the images of the corners which is contained in $\hat{\gamma}\big ((a+\delta, b-\delta)\big)$ has 1-dimensional Hausdorff measure zero (Sard's theorem again). Which in turn implies that $\hat{\gamma}^{-1}$ of this set has measure 0 in $[0,L]$. Since the interval and $\delta >0$ are arbitrary, it follows that the image of the set of all corner points under $l$ has measure 0.

It is not difficult to see that the points which map to tips are in $C$, for in a neighborhood of any such point the curve is at least two-to-one. Consider now the branch points. Recall that any branch region intersected with the $x$-axis has exactly two components. If either of these two components is more than a point, then that component is of course contained in $C$. It follows from this, and from the fact that the set of branch regions is countable, that the set of points which map to the branch points and which are not in $C$ is countable.

Therefore, we have that $l(\mathcal{V})$ is a closed set and is of measure 0. We now need the following 

\begin{lemma}
If $S \subset I$ is any countable set such that $S \cup \{0\} \cup \{1\}$ is closed then there is a $C^1$, monotone increasing, surjective function $\psi: I \to I$, whose set of critical values is precisely $S \cup \{0\} \cup \{1\}$. Moreover one can assume that $A = \txt{sup}_{x \in I} | \psi'(x) | \geq 1$ is independent of the set $S$.
\end{lemma}

\begin{proof}
It is a minor tweak of the construction presented in \cite{bates}. See the Appendix for details.
\end{proof}

It follows at once, by scaling and shifting, that the same can be said for any other interval $J$, and that the same bound is in fact independent not only of $S$,  but also of which interval is under consideration. Let us denote such an adjusted function by $\psi_{J,S}$.

Now, define the function $\phi : [0,L] \to [0,L]$ by:

\begin{displaymath}
\phi(s) = \left\{ \begin{array}{ll}
s & \textrm{If $s \in l(C)$.}\\ \\
\psi_{[a,b], l(\mathcal{V}) \cap [a,b] } & \textrm{If $t \in [a,b]$ where $(a,b)$ is one of the open intervals} \\ & \textrm{making up $[0,L] - l(C)$.}
\end{array} \right.
\end{displaymath}

In other words, the function $\phi$ fixes all the points in $l(C)$ and has a vanishing derivative at any point which corresponds to a branch point. It is obvious that $\phi$ is $C^1$ at any $s \notin l(C)$. Moreover, it is easy to see that $\phi$ is in fact Lipschitz (with constant $A$) for, let $0 \leq s_1 \leq s_2 \leq [0,L]$, then

\besn
\phi(s_2) - \phi(s_1)  & = & | \phi([s_1, s_2] )|   \\
& = &  | \phi(l(C) \cap [s_1, s_2] )|  + | \phi( [s_1,s_2] - l(C)   )   | \\
& = &  | l(C) \cap [s_1, s_2] | + | \phi(  [s_1, s_2] - l(C))| \\
& \leq & | l(C) \cap [s_1, s_2]| + A | [s_1, s_2] - l(C)|.\\
& \leq & A (  |l(C) \cap[s_1, s_2] | + | [s_1, s_2] - l(C) | ) = A ( s_2 - s_1). 
\eesn

where above $| \cdot |$ stands for the Lebesgue measure on $\mathbb{R}$.

Define now the homotopy $H : I \times I \to M$ via

\ben
H(t, r) = \hat{\gamma} \circ \rho \circ l,
\een

where $\rho (s , r) = (1-r) s + r \phi(s)$ is just the straight line homotopy between the identity and $\phi$. Let us show that $H$ is $C^1$. It is clear that at any point $t_0 \notin C$, the above function has continuous partial derivatives, since the three constituent functions are continuously differentiable at the corresponding points. Let $t_0 \in C$, then it follows that $H(t_0 + \delta t, r) - H(t_0, r) = o(\delta t)$, as this is an immediate consequence of the fact that $l$ has a vanishing derivative at any such point and from the fact that both $\rho$ and $\hat{\gamma}$ are Lipschitz. Also, at such points we trivially have $H(t_0, r+ \delta r) - H(t_0, r) = 0 = o (\delta r)$. Therefore, $H$ has both partials everywhere. Moreover they both vanish when $t \in C$ and are continuous when $t \notin C$. Now fix $t_0 \in C$. Note that 

\be
\label{eq:partial1}
\frac{\partial H}{\partial t} = \hat{\gamma}' \frac{\partial \rho}{ \partial s } l'   
\ee

at any point $t \notin C$. However, the first two multiplicands are bounded everywhere (the differentiated functions are Lipschitz) while $l'$ vanishes at every point in $C$ and is continuous everywhere. It follows that 

\ben
\lim_{t \to t_0 \, , \, t \notin C} \frac{\partial H}{ \partial t} = 0 ,
\een

uniformly in $r$, and since $\frac{ \partial H}{\partial t} = 0$ at any $t \in C$, we have that $\frac{\partial H}{\partial t}$ is continuous everywhere. Note that (\ref{eq:partial1}) implies that $|\frac{\partial H}{\partial t}| \leq A \, \,  \ \txt{sup}_{ t \in I} | l'(t)|$, where the constant $A$ was defined above. 

Consider now the other partial. If $t \notin C$, then this partial is given by

\be
\label{eq:partial2}
\frac{\partial H}{\partial r} = \hat{\gamma}' \big (  \phi(l(t)) - l(t)   \big ).
\ee

Note that if $t_0 \in C$, then $\phi( l(t)) - l(t) = (t- t_0) ( ( \phi\circ l)'(\tau) - l' (\tau) )$ for some $\tau$ between $t_0$ and $t$ by mean value theorem. Here we used the fact that $\phi \circ l$ is in fact differentiable, despite $\phi$ by itself being only Lipschitz as it is not differentiable precisely at the points where $l$ has a vanishing derivative. Using the fact that $\phi$ is Lipschitz and that it is differentiable everywhere away from $C$, it follows that $\phi(l(t)) - l(t) \to 0$ as $t \to t_0$. Utilizing the boundedness of $\hat{\gamma}'$ again, we get that $\frac{ \partial H}{\partial r}$ is continuous everywhere. Therefore $H$ is $C^1$. Also, (\ref{eq:partial2}) and mean value theorem again imply that $|\frac{\partial H}{\partial r}| \leq  A \, \,  \txt{sup}_{ t \in I} | l'(t)|$.  

The fact that $H(t,0) = \gamma(t)$ is obvious, and that $H(t,1)$ is a whisker which factors through the same tree is immediate from the monotonicity of $\phi$. It is clear that $H(I \times I) \subset \gamma(I)$, which forces $H$ to have rank at most 1. 

Finally, to get a thin homotopy we should have the appropriate derivatives vanish at the boundary of $I \times I$. It is obvious that the derivative vanishes at the two vertical edges forming $\partial (I \times I)$. If we let, e.g., $\tilde{H}(t, r) = H( t , \psi(r))$ we see that $\tilde{H}$ is the thin homotopy that we want.

\textit{Step 3: Any whisker is thinly homotopic to the constant map.}

The reader is urged at this point to read the Appendix as a closely related argument will be utilized in this step. 

It was proven in steps 1 and 2 that$\gamma$ factors through a tree (we haven't proven yet that it is simply connected), i.e. $\gamma = f \circ \tilde{\gamma}$, where $f$ is Lipschitz with constant 1, $\tilde{\gamma}$ is continuous everywhere, $C^1$ in the interior of every edge, and without loss of generality is $o(\delta t)$ when it is mapped to a vertex as we can assume that we have thinly homotoped the curve to one which has a vanishing derivative at any point mapping to a vertex.

Consider now the set of edges of the tree. Arrange them so that their lengths $\{ l_n \}_{n=1}^\infty$ are non-increasing. Assume that $\sum_{n=1}^\infty l_n = L''$. Associate to every edge an interval $l_n'$ such that $\sum_{n=1}^\infty l'_n = L' < \infty$, $l_n \leq l_n'$ and $\lim_{n \to \infty} \frac{l_n}{l_n'} = 0$. 

Recall that in step 1 we have constructed for any vertex $x$ a continuous path $\sigma: [0,|x|] \to T$, linking the root to this vertex.\ This path satisfied the property that $|\sigma(s) - \sigma(t) | = |s - t|$, and in particular $|\sigma(s)| = s$. Since every edge begins with a vertex, it is trivial to define an analogous map when $x$ is in the interior of any edge of $T$. Thus, for any $x \in T$, we have a path denoted by $\sigma_x: [0,|x|] \to T$, such that $\sigma_x (s)$ is the point on this path a distance $s$ from the root. It is obvious that $\sigma_x$ is Lipschitz with constant 1 and if $\sigma_x(s)$ is in the interior of an edge, then $\sigma_x$ is $C^1$ at such a point.

Note that if $x$ is a vertex, then $|x|$ is the sum of the lengths of all the edges traversed by $\sigma_x$. Define $|x|'$ to be the sum of the $l_n'$'s of the edges in this path. For any vertex $x$, denote by $\rho_x$ the function which takes $[0, |x|'] \to [0, |x|]$, which is $C^1$, monotone increasing, and when $s = |y|'$ for some vertex $y$ along this path, $\rho_x (s) = |y|$ and $\frac{d}{ds}\rho_x(s) = 0$. This is done by mapping $s = |y|'$ to $s = |y|$ for any vertex $y$ on this path and by interpolating by a (scaled and shifted) version of the same monotone, $C^1$ function with vanishing derivative at the endpoints. The proof that the resulting function is $C^1$ is a version of the proof in the Appendix.

The crucial fact about $\sigma_x$ and $\rho_x$ is that if $y$ is a vertex on the path $\sigma_x$ then $\rho_x = \rho_y$ and $\sigma_x = \sigma_y$ on the common domains as is evident from the construction of these functions. This fact guarantees the consistency of the definition of the map $\chi$ which shall be given momentarily.

Let $\chi : T \times L' \to T$ be defined in the following way:

\begin{itemize}
\item If $x$ is a vertex, let 

\begin{displaymath}
\chi(x, r) = \left \{ \begin{array}{ll}
x & \txt{if $r \leq L' - |x|'$,}\\
\sigma_x \circ \rho_x (L' - r) & \txt{if $r \geq L'- |x|'$}.\\
\end{array}
\right.
\end{displaymath}

\item If $x$ is a point in the interior of an edge $e$ whose initial vertex is $v_1$ and whose final vertex is $v_2$, and $| x - v_1| = \alpha |v_2 - v_1|$, let

\begin{displaymath}
\chi(x, r) = \left \{ \begin{array}{ll}
x & \txt{if $r \leq L' - |x_2|'$,}\\
x_1 + \alpha \, \chi (x_2, r) & \txt{if $ L' - |x_2|' \leq r \leq L'  - |x_1|'$}.\\
\chi(x_1, r) & \txt{if $ L' - |x_1|' \geq r$}.
\end{array}
\right.
\end{displaymath}

\end{itemize}

Note that the above map is in fact a continuous deformation retraction of $T$ to its root, and thus we have shown that $T$ is simply connected. 

Now, define the homotopy $H : I \times L' \to M$ via:

\ben
H = f \circ \chi \circ \tilde{\gamma}.
\een

Roughly speaking, $T$ is contracted within itself using $\chi$, such that the tips `stop' whenever they pass by a vertex. It is clear, intuitively, that $H$ is $C^1$ as the only troublesome points are the vertices and the holonomy `comes to a halt' at all such points, in both variables. At any rate, it is not hard, though quite tedious, to verify this rigorously. First, by considering the different cases, one can show $\frac{\partial H}{\partial t}$ and $\frac{\partial H}{\partial s}$ exist everywhere. There are five cases to consider for each partial, corresponding to the five possibilities in the definition of $\chi$. It is easy to see that both partials vanish whenever the image of a point under $\chi \circ \tilde{\gamma}$ is a vertex, and that they satisfy the inequalities $|\frac{\partial H (t,s)}{\partial t}| \leq | \dot{\gamma} (t) |$ and $|\frac{\partial H (t,s)}{\partial s}| \leq \rho'_x (L' - s) $ where $x$ is any vertex succeeding $\tilde{\gamma}(t)$. Using the fact that $\frac{\partial H (t_0, s)}{\partial t} =0$ if $t_0$ is in a vertex region and the first inequality, it is easy to see that this partial is continuous. Continuity of the other partial is a little more subtle: one needs to use the fact that $H(t_0, s)$ is a $C^1$ function as a function of $s$ if $t_0$ and then considering the various possibilities for the structure of the word around $t_0$, essentially, whether there are infinitely many letters in any neighbourhood of $t_0$ or not. 

It is trivial to check that $H(t, 0) = \gamma$ and $H(t, L')$ is the constant path, and it is obvious that $H$ has rank at most 1 as its image is contained in $\gamma(I)$. 

Finally, scaling and composing $H$ with a function which vanishes at the endpoints we obtain our thin homotopy.

\textit{Step 4: A curve with whiskers is thinly homotopic to one without.}

Assume the curve has whiskers, and consider the subwords which when reduced would be trivial. By concatenation and inclusion, we can assume that any such word is maximal (i.e. is not contained in a larger subword which is trivial when reduced). It follows that any two such words are separated by a nontrivial letter. Take the closure of the preimage of each such word. It should be clear that any two such preimages are disjoint (otherwise the two words could be concatenated and would form a larger reducible one). Each one of these preimages is a closed interval, but the set of all such intervals is not necessarily closed. However, since every tree contains at least one tip, and thus a point where the derivative vanishes, it follows that any limit point of these intervals which is not in one of them is in $C$. 

Now, for any whisker consider its root. Take its preimage under $\gamma$ and note that any interval in $I - C$ will contain at most two such points (belonging to two different whiskers, as there is always a point in $C$ in the `interior' of every whisker). Performing a similar procedure to the one in step 2 above (but simpler since $S$ now has at most two points), we can have a thin homotopy between the given curve to the one which has the derivative vanish at the root of every whisker.

Now, we can just apply the previous three steps to each one of the whiskers, performing all the homotopies simultaneously. The only subtle point is that one should take the edges of the entire curve  and not of the individual whiskers separately when defining the sequence $\{l'_n \}_{n=1}^\infty$ in step 3. Clearly, the two partials exist everywhere and are continuous away from the limit points of the set of roots of the whiskers. Using the two inequalities on partials at the end of the previous step it is easy to show that the partials are continuous at such limit points as well. Thus the resulting `total' homotopy is $C^1$. And since the images of those homotopies are always contained in the image of the curve the total homotopy is of rank at most one.

The fact that the resulting curve has no whiskers is obvious and the proof is complete. \end{proof}

We can now prove the following

\begin{theorem}
Assume $\gamma : I \to M$ is a loop with a vanishing derivative at the endpoints. Then the following are equivalent:
\begin{itemize}
\item[a-] $\gamma$ is a whisker.
\item[b-] $\gamma$ is thinly homotopic to the constant loop.
\item[c-] If $G$ is a semi-simple Lie group, then the holonomy of any smooth $G$-connection around $\gamma$ is trivial.
\item[d-] $\gamma$ factors through a tree via continuous maps.
\end{itemize}
\end{theorem}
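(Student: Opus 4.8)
The plan is to establish the cycle $(a)\Rightarrow(b)\Rightarrow(c)\Rightarrow(a)$ together with $(a)\Rightarrow(d)$ and $(d)\Rightarrow(a)$, which makes all four statements equivalent. Two of these implications are already in hand from the proof of Theorem~\ref{theorem:huge}: $(a)\Rightarrow(b)$ is exactly its Step~3 (``any whisker is thinly homotopic to the constant map''; in the degenerate case, where the word of $\gamma$ is empty, $\gamma$ is the constant loop itself), and $(a)\Rightarrow(d)$ is its Step~1 (a whisker factors as $f\circ\tilde\gamma$ with $f$ Lipschitz, hence continuous, and $\tilde\gamma$ continuous and, from the construction, a loop based at the root). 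So the substance lies in $(b)\Rightarrow(c)$, $(c)\Rightarrow(a)$ and $(d)\Rightarrow(a)$, and I expect $(c)\Rightarrow(a)$ --- the only place where semi-simplicity is used --- to be the main obstacle.

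For $(b)\Rightarrow(c)$ I would show that parallel transport is a $C^1$-thin-homotopy invariant, which needs nothing about $G$. Given a smooth connection $A$ on some $G$-bundle and a $C^1$ homotopy $H\colon I\times I\to M$ of rank at most one with $H(\cdot,0)$ constant, $H(\cdot,1)=\gamma$ and the boundary behaviour of Definition~\ref{def:thin}, pull $A$ back to the $C^1$ connection $H^*A$ over the square. Its curvature is $H^*F$, and since the differential of $H$ has rank $\le 1$ everywhere a two-form pulls back to zero, so $H^*A$ is flat. A flat $C^1$ connection on the simply connected square has trivial holonomy around every loop; applied to the boundary loop of the square, whose $H$-image is (up to the three constant edges) the loop $\gamma$, this yields $U(\gamma,A)=e$. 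One may equivalently invoke the non-abelian Stokes theorem.

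The heart of the matter is $(c)\Rightarrow(a)$, which I would prove in contrapositive form: if $\gamma$ is not a whisker, I produce a semi-simple group and a smooth connection with nontrivial holonomy around $\gamma$. By the characterization of reducibility recorded above, $\gamma$ not being a whisker means that some finite truncation of its word $w$ --- the word $\phi_S(w)$ obtained by deleting all letters outside a finite set $S=\{a_{i_1},\dots,a_{i_k}\}$, where $\phi_S$ is the evident homomorphism killing the other generators --- is a nontrivial element of the free group $F_k$. Take $G=SU(2)$, which is semi-simple and contains a free subgroup of every finite rank, and pick $g_1,\dots,g_k\in SU(2)$ generating one freely, so that $\phi_S(w)$ evaluated at the $g_j$ is $\neq e$. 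It then remains to build a smooth $SU(2)$-connection $A$ whose parallel transport along the arc $e_{i_j}$ (in its chosen orientation) is $g_j$ and whose parallel transport along every other arc of $\gamma$ is trivial; since $\exp\colon\mathfrak{su}(2)\to SU(2)$ is surjective this is purely local once the arcs have been separated. Theorem~\ref{theorem:words} then lets one cut $\gamma$ into finitely many sub-paths, each traversing one of the $e_{i_j}$ exactly once (each arc is visited only finitely often, by part~(b)) or lying where $A$ vanishes, so that $U(\gamma,A)$ is the \emph{finite} ordered product of the corresponding $g_j^{\pm1}$, which spells out $\phi_S(w)\neq e$. The obstacle is the point-set lemma that makes this legitimate: I must find pairwise disjoint open sets $U_j\subset M$, with $U_j$ a neighbourhood of an interior point of $e_{i_j}$ that meets $\gamma(I)$ in a single sub-arc of $e_{i_j}$ and in nothing else, so that the support of $A$ cannot leak onto the other arcs (which may, as for the Hawaiian earring, share endpoints with the $e_{i_j}$). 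This is where part~(d) of Theorem~\ref{theorem:words} is decisive: distinct arcs have disjoint images, no other arc and no component of $A_0$ can accumulate at an interior point of $e_{i_j}$ (a sequence of such points converging there would eventually lie in a single component of some $A_n$), and compactness of $\gamma(I)$ then delivers the $U_j$.

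Finally, for $(d)\Rightarrow(a)$ I would show that a factorization $\gamma=f\circ\tilde\gamma$ with $\tilde\gamma$ a continuous loop in a tree $T$ forces the word of $\gamma$ to reduce to the empty word, i.e.\ forces a nested pairing of its letters. The point is that $\tilde\gamma$, a loop in a one-dimensional simply connected space, must use each edge of $T$ in a balanced, nested way: once it has left the root-side of an edge it must return across that edge before crossing any earlier edge again. Matching, via Theorem~\ref{theorem:words}, the arcs of $\gamma$ to the edges of $T$ traversed by $\tilde\gamma$, this nesting descends to a nested pairing of the letters of $w$ --- exactly the condition for $\gamma$ to be a whisker. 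The subtlety is that $f$ need not be injective and may glue distinct edges of $T$ together; but by part~(d) of Theorem~\ref{theorem:words} two arcs of $\gamma$ are either disjoint or identical, so such gluings are harmless. With $(d)\Rightarrow(a)$ the cycle closes and the four statements are equivalent.
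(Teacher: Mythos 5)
Your overall structure matches the paper's: the cycle $(a)\Rightarrow(b)\Rightarrow(c)\Rightarrow(a)$ together with $(a)\Rightarrow(d)\Rightarrow(a)$, with the two ``$(a)\Rightarrow$'' implications quoted from Theorem~\ref{theorem:huge}. Two small remarks there. First, the paper cites Steps~1 \emph{and}~3 for $(a)\Rightarrow(d)$, since simple connectivity of the tree built in Step~1 is only established via the explicit retraction constructed in Step~3; you cite only Step~1. Second, for $(b)\Rightarrow(c)$ the paper simply delegates to \cite{caetano}, whereas you reprove it via the flat pullback connection on the square. Your sketch is morally right, but with $H$ only $C^1$ the connection $H^*A$ has merely continuous coefficients, so ``flatness'' and the monodromy argument need an extra regularisation or approximation step --- exactly what \cite{caetano} supplies. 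Your $(d)\Rightarrow(a)$ is the same nesting argument the paper carries out (the paper makes it explicit via positive/negative occurrences of letters and the monotonicity of $|\tilde\gamma(t)|$ on each edge).

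The substantive deviation is $(c)\Rightarrow(a)$, and there is a real gap in your version. The group $G$ of the bundle is \emph{fixed} throughout (see the remark after Definition~1, and the Corollary, which is stated ``for a semi-simple $G$''), so proving the contrapositive requires producing, for the \emph{given} semi-simple $G$, a connection with nontrivial holonomy. The paper does this by constructing, for any $n$-tuple $(g_1,\dots,g_n)\in G^n$, a connection whose holonomy is $\omega(g_1,\dots,g_n)$, and then invoking Borel's theorem that a nontrivial word map $G^n\to G$ has image containing an open neighbourhood of the identity when $G$ is compact semi-simple (with a Cartan-decomposition/centre argument to reduce the non-compact case to the compact one). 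You instead pick a free $n$-tuple, which is a genuinely more elementary route --- no Borel --- but you hardcode $G=SU(2)$, which only shows that \emph{some} semi-simple group sees the nontriviality, not the given one. The fix is easy and keeps your approach intact: every semi-simple Lie group of positive dimension contains a nonabelian free subgroup (ping-pong/Tits alternative), and a free group of rank two contains free subgroups of every finite rank, so a free $n$-tuple can be chosen inside the given $G$. With that one change your construction of the connection supported in pairwise disjoint neighbourhoods of interior points of the arcs, which you correctly justify from Theorem~\ref{theorem:words}(b),(d), proves the implication for arbitrary semi-simple $G$ and even sidesteps the paper's compact/non-compact case split.
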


\begin{proof}
$(a) \implies (b)$ is nothing but a special case of theorem \ref{theorem:huge} above.

$(b) \implies (c)$ was proven in \cite{caetano}.

To see that $(c) \implies (a)$, assume $\gamma$ is not a whisker. Thus, its reduced word is nontrivial. Therefore, one of the truncations to finitely many letters is not reducible. Let us denote the letters in this truncation by $a_1, \dots, a_n$ and the corresponding word by $\omega$. Consider the images of the intervals corresponding to these letters. They are, when restricted to their interiors, disjoint embeddings. It follows at once that given any Lie group $G$, and an $n$-tuple of group elements $g_1, \dots, g_n$, that there is a smooth $G$-connection whose parallel transport along the loop is the image of $(g_1, \dots, g_n)$ under the word map $G^n \to G$ defined by the (truncated) word.  However, it was shown in \cite{borel}, that the image of a word map $G^n \to G$ defined by a nontrivial word, would contain a nonempty open subset of the identity if $G$ is compact and semi-simple. In particular it would contain nontrivial elements. It is not difficult to see that even if $G$ is not compact the image of the word map will be nontrivial. The reason for this is that since $G$ is semi-simple then, if $K$ is its analytic subgroup (from the Cartan decomposition) and $Z$ is its center, then $K/Z$ is a compact semi-simple group. It is clear that the word map descends to the quotient $(K/Z)^n$. The image of this descended map is non-trivial, and thus its image before quotienting must have been non-trivial as well. And thus, there is a smooth $G$-connection whose holonomy around $\gamma$ is nontrivial.

$(a) \implies (d)$ is steps 1 and 3 of theorem \ref{theorem:huge}. It remains to prove that $(d) \implies (a)$. Suppose that the curve factorizes through a tree. We shall shift the tree to put its root at the origin. We will be done if we can show that the word associated to the curve is reducible.  Suppose $\gamma = f \circ \tilde{\gamma}$ with $\tilde{\gamma}:I \to T$. It is easy to see that if $a$ and $b$ are two different letters in the word, then their images (i.e. the images of the open intervals corresponding to them) under $\tilde{\gamma}$ are disjoint. Additionally, it follows, using theorem 1 and simple connectivity of the tree, that the images of the same letter under $\tilde{\gamma}$ are either identical or disjoint. We shall treat different occurrences of the same letter which have different images under $\tilde{\gamma}$ as different letters. For any letter, pick an edge whose interior is contained in the image of the letter under $\tilde{\gamma}$. Due to the way we have assumed our trees to be embedded in $l^1$, it follows that $|\tilde{\gamma}(t)|$ is a monotone function of $t$ when restricted to map to an edge. Call a letter positive if this function is increasing and negative if it is decreasing. It is easy to see that any letter must appear an even number of times, being alternatingly positive and negative, with the first occurrence in the word being positive. Pair a positive occurence of the letter with the first next negative one. It is straightforward to check that this pairing reduces the word to a trivial one and the proof is complete.
\end{proof}

We now have the following immediate corollary:

\textbf{Corollary : } If $\gamma_1$ and $\gamma_2$ are two curves with coinciding endpoints at which their derivatives vanish then the following are equivalent:

\begin{itemize}
\item[a-] The reduced word associated to $\gamma_1$ is the same as that for $\gamma_2$.
\item[b-] $\gamma_1$ is thinly homotopic to $\gamma_2$.
\item[c-] $\gamma_1$ and $\gamma_2$ are holonomically equivalent for a semi-simple $G$.
\item[d-] $\gamma_1\cdot \overline{\gamma}_2$ factors through a tree via continuous maps.
\end{itemize}

\section*{Appendix}

We sketch here the proof of the modification of the results of \cite{bates} that was used above.

\begin{lemma}
Assume $S \subset I$ is a closed set, of measure 0 which contains both $\{0\}$ and $\{1\}$. Then there is a monotone increasing surjective function $\psi : I \to I$ whose set of critical values is precisely $S$. Moreover, one can assume that $A = \txt{sup}_{x \in I} |\psi'(x) | \geq 1$ is independent of $S$.
\end{lemma}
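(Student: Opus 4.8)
The plan is to build $\psi$ by prescribing its derivative. Since $S$ has measure zero it is closed and nowhere dense, so $I\setminus S=\bigsqcup_{n\ge1}(a_n,b_n)$ with $\ell_n:=b_n-a_n$ and $\sum_n\ell_n=1$ (if $S$ is finite this is a finite union, and all the formulas below still apply). I will choose pairwise disjoint open intervals $(c_n,d_n)\subset I$, arranged in the same left-to-right order as the $(a_n,b_n)$, of lengths $\mu_n:=d_n-c_n$ with $\sum_n\mu_n\le1$ — concretely $c_n:=\sum_{m:\,b_m\le a_n}\mu_m$ and $d_n:=c_n+\mu_n$, which one checks are disjoint, correctly ordered, and inside $I$ — and I set $E:=I\setminus\bigsqcup_n(c_n,d_n)$. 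Fixing once and for all a $C^1$ bump $k:[0,1]\to[0,\infty)$ with $k>0$ on $(0,1)$, $k(0)=k(1)=0$ and $\int_0^1k=1$ (e.g.\ $k(t)=6t(1-t)$), I put $g\equiv0$ on $E$ and $g(x)=\tfrac{\ell_n}{\mu_n}\,k\!\big(\tfrac{x-c_n}{\mu_n}\big)$ on each $[c_n,d_n]$, and define $\psi(x):=\int_0^x g$.

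The one genuine choice is the sequence $\{\mu_n\}$, and this is where I expect the main (though not deep) difficulty to sit: I need \emph{simultaneously} $\sum_n\mu_n\le1$, $\ \ell_n/\mu_n\to0$ as $n\to\infty$, and a bound $\sup_n\ell_n/\mu_n\le$(absolute constant). The naive choice $\mu_n\propto\ell_n^{\,\beta}$ with $\beta<1$ need not even be summable when $\{\ell_n\}$ decays slowly, so instead, using that $\sum_n\ell_n$ converges, I pick $1=N_0<N_1<N_2<\cdots$ with $\sum_{n\ge N_k}\ell_n\le4^{-k}$, set $\omega_n:=2^{k-1}$ for $N_{k-1}\le n<N_k$, and let $\mu_n:=\tfrac12\ell_n\omega_n$. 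Then $\sum_n\mu_n\le\tfrac12\sum_{k\ge1}2^{k-1}4^{-(k-1)}=1$, while $\ell_n/\mu_n=2/\omega_n\to0$ and $\sup_n\ell_n/\mu_n=2$ because $\omega_n\ge\omega_1=1$; it is this last fact that makes the eventual Lipschitz constant $A$ independent of $S$. (This is precisely the ``minor tweak'' of the construction in \cite{bates}.)

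It then remains to verify the assertions. Continuity of $g$ is immediate on each $(c_n,d_n)$; at a point $e\in E$, if one had $g(x_j)\ge\delta>0$ with $x_j\to e$, then each interval $(c_{m_j},d_{m_j})$ containing $x_j$ would satisfy $\ell_{m_j}/\mu_{m_j}\ge\delta/\sup k$, so (as $\ell_n/\mu_n\to0$) the indices $m_j$ take only finitely many values, whence some single $(c_m,d_m)$ contains infinitely many $x_j$, forcing $e\in\{c_m,d_m\}$ and hence $g(x_j)\to\tfrac{\ell_m}{\mu_m}k(0\text{ or }1)=0$, a contradiction. With $g$ continuous, $\psi=\int_0^\cdot g$ is $C^1$ with $\psi'=g\ge0$, so $\psi$ is monotone increasing; $\psi(0)=0$ and $\psi(1)=\sum_n\int_{c_n}^{d_n}g=\sum_n\ell_n=1$, so $\psi$ is surjective; and $\sup_I|\psi'|=(\sup k)\sup_n\tfrac{\ell_n}{\mu_n}=2\sup k=:A$, an absolute constant, necessarily $\ge\int_0^1g=1$. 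Finally, the critical points of $\psi$ are exactly $\{g=0\}=E$; a short order computation using $c_n=\sum_{b_m\le a_n}\mu_m$ and $\sum_n\ell_n=1$ gives $\psi(c_n)=a_n$, $\psi(d_n)=b_n$, so $\psi$ carries each $[c_n,d_n]$ onto $[a_n,b_n]$ and thus $\bigsqcup_n(c_n,d_n)$ onto $I\setminus S$. Since $\psi$ is a continuous monotone surjection of $I$, this forces $\psi(E)\supseteq S\setminus\{a_n,b_n\}$, and with $\psi(c_n)=a_n,\ \psi(d_n)=b_n$ we get $\psi(E)\supseteq S$; conversely each value $\psi(e)$, $e\in E$, equals $\sum_{d_n\le e}\ell_n$, a limit from the left of numbers $b_n$ (or $0$), hence lies in the closed set $S$. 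Therefore $\psi(E)=S$ and the critical value set of $\psi$ is precisely $S$, which completes the proof. The passage to the countable version stated in the main text is the special case $S\rightsquigarrow S\cup\{0\}\cup\{1\}$.
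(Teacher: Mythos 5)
Your construction is correct and is essentially the same as the paper's adaptation of Bates--Norton: both hinge on choosing an auxiliary length sequence $\mu_n$ (the paper's $l'_n$) with $\sum_n\mu_n$ bounded and $\ell_n/\mu_n\to 0$, and then interpolating across each complementary gap of $S$ with a fixed $C^1$ profile whose derivative (your $k$, the paper's $f'$) vanishes at the endpoints, so that $\psi'$ dies out continuously on the accumulation set. The only differences are cosmetic: you integrate the bump profile directly inside $[0,1]$ rather than first building $\widehat{\psi}$ on a stretched domain $[0,2]$ and rescaling, and you make explicit (via the dyadic-tail choice of $\omega_n$) the auxiliary sequence whose existence the paper merely asserts is ``easy to see.''
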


\begin{proof}
The complement of $S$ is a countable collection of disjoint open intervals. Note that it is naturally a linearly ordered set which we shall denote by $\mathcal{I}$. Assume the lengths of these intervals are $\{l_n \}_{n=1}^\infty$, and have been ordered to be non-increasing. Since $|S|=0$, if follows that $\sum_{n=1}^\infty l_n = 1$. It is easy to see that there is a non-increasing sequence $\{ l'_n \}_{n=1}^\infty$ which satisfies:

\begin{itemize}
\item $l'_n \geq l_n$ for all $n \in \mathbb{N}$. 
\item $\sum_{n=1}^\infty l'_n = 2$.
\item $\lim_{n \to \infty} \frac{l_n}{l'_n} = 0$.
\end{itemize}

Let $f : I \to I$ be a $C^1$, monotone increasing, onto function whose derivative vanishes only at $\{0 \}$ and $\{ 1 \}$, such that $\txt{sup}_{x \in I } |f'(x) | = 2$.

Suppose $x \in S$, let $g(x) = \sum_{i \in \mathcal{I}, i < x} l'_i$. It is easy to see that $g: S \to [0,2]$ is a continuous, monotone injection. Therefore $S' = g(S)$ is a closed set. Thus, its complement is a countable collection of open intervals. Suppose, $(y_1, y_2) = (g(x_1), g(x_2))$ is one such interval. Note that if $x_2 - x_1 = l_n$ then $y_2 - y_1 = l'_n$. It follows that we have a natural bijection between the elements of $\mathcal{I}$ and the complement of $S'$, such that an interval of length $l_n$ corresponds to an interval of length $l'_n$. 

Define a function $\widehat{\psi}$ to be simply an extension of $g^{-1}$ from $S'$ to all of $[0,2]$ using a scaled, translated version of $f$ to map the complement of $S'$ onto the complement of $S$. More precisely, if $a'$ is the endpoint of an interval forming the complement of $S'$, and $a$ is the endpoint of the corresponding interval forming the complement of $S$, then $\widehat{\psi}$ restricted to $[a', a'+l']$ is given by:

$$
\widehat{\psi}(x) = a + l f \Big (   \frac{x- a'}{l'} \Big )
$$

It is obvious that $\widehat{\psi}$ is monotone, that it is $C^1$ on the complement of the set of limit points of $S'$, that it is differentiable everywhere, and that its set of critical values is precisely $S$. That $\widehat{\psi}$ is $C^1$ everywhere follows easily from the fact that $\lim_{n \to \infty} \frac{l_n}{l'_n} = 0$. Finally, it is trivial to verify that $\txt{sup}_{x \in [0,2]} |\widehat{\psi}'(x) |  \leq 2$. Setting $\psi(x) = \widehat{\psi}( \frac{x}{2} )$ we obtain the function we want. \end{proof}

\ni \texttt{{\footnotesize Department of Mathematics, American University of Beirut, Beirut, Lebanon.}
}
\texttt{\footnotesize{Email}} : \textbf{\footnotesize{tamer.tlas@aub.edu.lb}} 

\end{document}